\newcommand{\matriz}[4]{\begin{pmatrix} #1 & #2\\ #3 & #4\end{pmatrix}}
\newcommand{\wh}[1]{\widehat{#1}}
\newcommand{\wt}[1]{\widetilde{#1}}
\newtheorem{theorem}{Theorem}
\newtheorem{proposition}[theorem]{Proposition}
\newtheorem{lemma}[theorem]{Lemma}
\newtheorem*{example}{Example}
\theoremstyle{remark}
\theoremstyle{definition}
\begin{document}

\title{WEAK $G$-IDENTITIES FOR THE PAIR $(M_2( \mathbb{C}),sl_2( \mathbb{C}))$}

\author[C\'odamo]{Ramon C\'odamo}
\thanks{R. C\'odamo was financed in part by the Coordena\c c\~ao de
Aperfei\c{c}oamento de Pessoal de N\'{\i}vel Superior - Brasil (CAPES) - Finance Code 001}
\address{Department of Mathematics, UNICAMP, 13083-859 Campinas, SP,  Brazil}
\email{ramoncodamo@gmail.com}

\author[Koshlukov]{Plamen Koshlukov}
\thanks{P. Koshlukov was partially supported by FAPESP grant No.~2018/23690-6 and by CNPq grant No.~302238/2019-0}
\address{Department of Mathematics, UNICAMP, 13083-859 Campinas, SP,  Brazil}
\email{plamen@ime.unicamp.br}

\keywords{Group actions, Identities with group action, Basis of identities}

\subjclass{16R10, 16R50, 16W22, 17B01}
\begin{abstract}
In this paper we study algebras acted on by a finite group $G$ and the corresponding $G$-identities.
Let $M_2( \mathbb{C})$ be the $2\times 2$ matrix algebra over the field of complex numbers $ \mathbb{C}$ and let $sl_2( \mathbb{C})$ be the Lie algebra of traceless matrices in $M_2( \mathbb{C})$. Assume that $G$ is a finite group acting as a group of automorphisms on $M_2( \mathbb{C})$. These groups were described in the Nineteenth century, they consist of the finite subgroups of $PGL_2( \mathbb{C})$, which are, up to conjugacy, the cyclic groups $ \mathbb{Z}_n$, the dihedral groups $D_n$ (of order $2n$), the alternating groups $ A_4$ and $A_5$, and the symmetric group $S_4$. The $G$-identities for $M_2( \mathbb{C})$ were described by Berele. The finite groups acting on $sl_2( \mathbb{C})$ are the same as those acting on $M_2( \mathbb{C})$. The $G$-identities for the Lie algebra of the traceless $sl_2( \mathbb{C})$ were obtained by Mortari and by  the second author. We study the weak $G$-identities of the pair $(M_2( \mathbb{C}), sl_2( \mathbb{C}))$, when $G$ is a finite group. Since every automorphism of the pair is an automorphism for $M_2( \mathbb{C})$, it follows from this that $G$ is one of the groups above. In this paper we  obtain bases of the weak $G$-identities for the pair $(M_2( \mathbb{C}), sl_2( \mathbb{C}))$ when $G$ is a finite group acting as a group of automorphisms.

\end{abstract}
\maketitle

\section*{Introduction}

Let $F$ be a field and $F\langle X\rangle$ be the free associative unital $F$-algebra freely generated by a countable infinite set $X=\{x_1,x_2,\ldots\}$. A polynomial $f(x_1,\ldots, x_n)\in F\langle X\rangle$ is a polynomial identity for an $F$-algebra $A$ if $f(a_1,\ldots,a_n)=0$ for every $a_1$, \dots, $a_n\in A$. If there exists a non-trivial identity for $A$, then we say $A$ is a PI-algebra. The set Id(A) of all polynomial identities of $A$ is a $T$-ideal, that is, it is an ideal invariant under all endomorphisms of the free associative algebra. A subset $S\subseteq F\langle X\rangle$ is a basis for the identities of $A$ if $Id(A)$ is the least $T$-ideal that contains $S$. 

The description of the polynomial identities of a given algebra is one of the most important (and difficult) problems in PI theory and bases, that is generating sets, are only known in a few cases. One of the most important algebras, for obvious reasons, is $M_n(F)$, the algebra of the $n\times n$ matrices; no basis of its polynomial identities is known if $n>2$, except for $n=3$, and $4$, when $F$ is a finite field \cite{Genov}, \cite{GenovSiderov}. For $n=2$ and $F$ a field of characteristic 0,  Razmyslov \cite{Razmyslov} obtained a finite basis of identities. Over finite fields,  Malt'sev and Kuzmin \cite{MK} described a finite basis for $M_2(F)$. On the other hand, if $F$ is an infinite field of characteristic $p>2$, a basis of identities for $M_2(F)$ was obtained in \cite{Koshlukov}; the proof given there relied heavily on invariant theory, and on the so-called weak identities. The weak identities (also called identities of representations) were introduced by Razmyslov in \cite{Razmyslov}, and he used these in order to describe the identities satisfied by the associative algebra $M_2(F)$, and also by the Lie algebra $sl_2(F)$ when $F$ is a field of characteristic 0. Let $A^{(-)}$ be the Lie algebra obtained from an associative algebra $A$ by defining $[a,b]=ab-ba$ and let  $L\subseteq A^{(-)}$ be a Lie subalgebra. Then an associative polynomial $f(x_1,\ldots, x_n)\in F\langle X\rangle$ is a weak polynomial identity (weak identity for short) for the pair $(A, L)$ if it vanishes under every substitution of the $x_i$ by elements of $L$. Weak identities can be defined analogously for Jordan algebras and other structures that can arise from associative algebras. They turned out to be a powerful tool in studying polynomial identities for such algebras. Currently, weak polynomial identities are objects of great interest in PI theory, an overview of the topic can be found in \cite{D_Weak_identities}.

If $\phi\colon G\rightarrow Aut(A)$ is a group homomorphism, where $Aut(A)$ is the group of automorphisms of $A$, then $\phi$ induces an action of $G$ on $A$, and we say that $A$ is a $G$-algebra. Let $F\langle X;G\rangle$ be the free associative $F$-algebra freely generated by the set $X\times G$. We denote the pairs $(x,g)\in X\times G$ by $x^g$ (or $g(x)$) and $x^e$ by $x$, where $e$ is the identity of $G$. The group $G$ acts naturally on the free associative algebra $F\langle X;G\rangle$ in the following way: given $g\in G$ and a monomial $m=x_1^{g_1}\cdots x_n^{g_n}\in F\langle X;G\rangle$, we have $m^g=x_1^{gg_1}\cdots x_n^{gg_n}$. Now, for each $f=\sum \alpha_im_i\in F\langle X;G\rangle$, we set $f^g=\sum\alpha_im_i^g$. The algebra $F\langle X;G\rangle$ satisfies the following universal property: given any associative $G$-algebra $A$ and any set-theoretic map $\alpha\colon X\rightarrow A$, there exists a unique $G$-algebra homomorphism $\widetilde{\alpha}\colon F\langle X; G\rangle \rightarrow A$ which extends $\alpha$, that is, a homomorphism such that $\widetilde{\alpha}(f^g)=\widetilde{\alpha}(f)^g$, for each $g\in G$ and $f\in F\langle X;G\rangle$. We say that $F\langle X;G \rangle$ is the free associative $G$-algebra freely generated by $X$. Analogously, we can define $L\langle X;G\rangle$, the free Lie $G$-algebra freely generated by $X$. The elements of $F\langle X;G \rangle$ are called $G$-polynomials, while the elements of $L\langle X;G \rangle$ are called Lie $G$-polynomials.

A $G$-polynomial $f(x_1^{g_1},\ldots,x_n^{g_n})$ is called a $G$-polynomial  identity (or $G$-identity) for $A$ if $f(a_1^{g_1},\ldots,a_n^{g_n})=0$ for every $a_1$, \dots, $a_n\in A$, that is, if $f$ lies in the intersection of all kernels of $G$-homomorphisms $\alpha\colon F\langle X;G\rangle \rightarrow A$. Of course, we can replace fields with (associative commutative and unital) rings in the above definitions. In analogy with the case of ordinary identities, we can define ideals of $G$-identities (or $G$-ideals), basis of $G$-identities, and so on. Motivated by Amitsur's paper on rings with involution \cite{Amitsur}, Kharchenko \cite{Kharchenco} initiated the study of rings satisfying $G$-identities. On the other hand, the study of algebras (over a field) satisfying $G$-identities, began with a work of Giambruno and Regev \cite{GR}. 

Let $sl_n( \mathbb{C})$ be the Lie algebra of the traceless matrices of order $n$ over the complex numbers. It follows from the Skolem--Noether theorem that the group of automorphisms of $M_n( \mathbb{C})$ is the projective linear group $PGL_n( \mathbb{C})=GL_n( \mathbb{C})/Z(GL_n( \mathbb{C}))$. If $n=2$, it is well known that this is also the automorphism group of the Lie algebra $sl_2( \mathbb{C})$, but if $n> 2$, the automorphisms of $sl_n$ do not consist only of conjugations \cite[Chapter IX, Section 5]{Jacobson}. Berele \cite{Berele} obtained a basis of the $G$-identities for $M_2( \mathbb{C})$, when $G$ is a finite group acting faithfully on $M_2( \mathbb{C})$, that is, $G$ is a finite subgroup of $PGL_2( \mathbb{C})$. These groups have been known for a long time, formally since around 1870, although in ancient Greece they knew a description of the regular polyhedra (recall that the group of rotations of the tetrahedron is $A_4$, that of the cube and of the octahedron is $S_4$, and that of the icosahedron and of the dodecahedron is $A_5$). They consist, up to conjugacy, of the cyclic groups $ \mathbb{Z}_n$, the dihedral groups $D_n$ (of order $2n$), the alternating groups  $A_4$ and $A_5$, and the symmetric group $S_4$. The formal proof was first given by Klein \cite{Klein} in 1884. The description of the finite subgroups of $PGL_2(F)$, up to conjugacy, is also known for every separably closed field $F$ \cite{B}.  As mentioned above, the finite subgroups of automorphisms for the Lie algebra $sl_2(F)$ are the same as those for $M_2(F)$. Considering each of these, Mortari and the second author \cite{KM}, obtained a basis of  $G$-identities for the Lie algebra $sl_2( \mathbb{C})$.

Let $A$ be a $G$-algebra, and $L\subseteq A^{(-)}$ be a Lie subalgebra invariant under $G$-action. The pair $(A,L)$ is sometimes called an associative-Lie $G$-pair. We say that a $G$-polynomial $f(x_1^{g_1},\ldots, x_n^{g_n})\in F\langle X;G\rangle$ is a weak $G$-polynomial identity (or weak $G$-identity) for the par $(A,L)$ if it vanishes on $L$ under every substitution of the $x_i$ by elements of $L$. In this paper we describe the weak $G$-identities for the pair $(M_2( \mathbb{C}),sl_2( \mathbb{C}))$, when $G$ is a finite group. There is a well known duality between actions and gradings by finite abelian groups, see for example \cite[Sect. 3.3, pp. 63-69]{GZ}. The proof in \cite{GZ} is for associative algebras, but it remains valid for non-associative algebras, and pairs, as well. This duality was necessary for the description of the $G$-identities for $M_2( \mathbb{C})$ and for $sl_2( \mathbb{C})$. The same phenomenon occurs in the description of the weak $G$-identities of the pair $(M_2( \mathbb{C}), sl_2( \mathbb{C}))$. As in the case of polynomial identities, the set $Id^G(A,L)$ of all weak $G$-identities of the pair $(A, L)$ is an ideal. It is invariant by all endomorphisms of the $G$-pair $(F\langle X;G\rangle, L\langle X;G\rangle)$, that is, every endomorphism of pairs commuting with the $G$-action (see \cite{Razmyslov2}). We say that $Id^G(A,L)$ is a $G$-ideal, and as in the case of ordinary identities, we can define a basis of weak $G$-identities. 

Consider a pair $(A,L)$ and let $Aut(A,L)$ be its group of automorphisms. Observe that if $\phi\colon G\rightarrow Aut(A,L)$ is a $G$-action and $H$ is its kernel, then $\overline{G}=G/H$ acts faithfully on $A$, with $a^{gH}=a^g$. Furthermore, the map $\gamma\colon F\langle X;G\rangle\rightarrow F\langle X;\overline{G}\rangle$ given by $\gamma(x^g)=x^{gH}$ is a $G$-epimorphism and $Id^G(A)=\gamma^{-1} (Id^{\overline{G}}(A))$. Thus, without loss of generality we can assume that $G$ acts faithfully on the pair.

\section{Preliminaries}

Let $A= \mathbb{C}[a_i,b_i,c_i\mid i\geq 1]$ be the commutative polynomial algebra in the variables $X=\{a_i,b_i,c_i\mid i\geq 1\}$ and let $G\subseteq Aut(M_2( \mathbb{C}))$ be a finite group of automorphisms acting faithfully on $M_2( \mathbb{C})$. Then $G\subseteq PGL_2( \mathbb{C})$ acts on $M_2(A)$ by conjugations. Consider the associative and Lie $G$-algebras, respectively, $R^G$, $S^G\subseteq M_2(A)$, generated over $ \mathbb{C}$ by:
\[
s_i=\matriz{a_i}{b_i}{c_i}{-a_i},\quad i\geq 1.
\]
It can be proved that the pair $(R, S)$ is the relatively free associative-Lie $G$-pair in the variety of $G$-pairs generated by $id^G(M_2 (\mathbb{C}),sl_2( \mathbb{C}))$, and the proof is the same as that of the case of associative algebras, see for example \cite[Theorem 1.1.4]{GZ}. From now on, we will always assume that $G\subseteq Aut(M_2( \mathbb{C}),sl_2( \mathbb{C}))=Aut(M_2( \mathbb{C}))$ is a finite group acting faithfully on the pair $Aut(M_2(  \mathbb{C}),sl_2( \mathbb{C}))$, that is, $G$ is one of the groups $ \mathbb{Z}_n$, $D_n$, $A_4$, $A_5$ or $S_4$. The following lemma is proved in the same way as Lemma 2 of \cite{Berele}. Its proof is elementary and consists in observing that conjugation by an invertible matrix is an automorphism.

\begin{lemma}\label{mudar}
Let $B\in M_2(\mathbb{C})$ be an invertible matrix, and let $\phi\colon G\rightarrow PGL_2(\mathbb{C})$ and $\psi\colon G\rightarrow PGL_2(\mathbb{C})$ be actions of a group $G$ on the $G$-pair $(M_2(\mathbb{C}),sl_2(\mathbb{C}))$ such that $\phi(g)=B\psi(g)B^{-1}$ for every $g\in G$. Then $(M_2 ( \mathbb{C}),sl_2( \mathbb{C}))$ satisfies the same $G$-identities with respect to each of these actions.
\end{lemma}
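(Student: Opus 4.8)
The plan is to exhibit an explicit $G$-isomorphism between the two relatively free pairs associated to the actions $\phi$ and $\psi$, induced by conjugation by $B$. Concretely, set up the commutative polynomial algebra $A = \mathbb{C}[a_i,b_i,c_i \mid i\geq 1]$ and the generic traceless matrices $s_i$ as in the Preliminaries, and let $(R_\phi, S_\phi)$ and $(R_\psi, S_\psi)$ denote the relatively free associative-Lie $G$-pairs for the two actions. Since $B \in M_2(\mathbb{C})$ is invertible, conjugation $\rho_B \colon M \mapsto BMB^{-1}$ is an automorphism of the associative algebra $M_2(\mathbb{C})$ which preserves the trace, hence restricts to an automorphism of $sl_2(\mathbb{C})$; it therefore extends coefficientwise to an automorphism of the pair $(M_2(A), sl_2(A))$ fixing $A$ pointwise. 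The first step is to observe that $\rho_B$ carries the set $\{ B^{-1} s_i B \mid i\geq 1\}$ to $\{s_i \mid i \geq 1\}$, and that $\{B^{-1} s_i B\}$ is again a family of generic traceless matrices (the entries are $\mathbb{C}$-linear combinations of $a_i, b_i, c_i$, still algebraically independent since $\rho_{B^{-1}}$ is invertible).

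Next I would verify that $\rho_B$ intertwines the two $G$-actions: the hypothesis $\phi(g) = B\psi(g)B^{-1}$ means precisely that for $M \in M_2(A)$ one has $\rho_B\big(M^{\psi(g)}\big) = \big(\rho_B(M)\big)^{\phi(g)}$, where $M^{\psi(g)}$ denotes conjugation of $M$ by the matrix $\psi(g)$ (applied entrywise, i.e. acting trivially on $A$). Thus $\rho_B$ is an isomorphism of $G$-pairs from $(M_2(A), sl_2(A))$ with the $\psi$-action onto $(M_2(A), sl_2(A))$ with the $\phi$-action. Restricting $\rho_B$ to the $G$-subpair generated by the $s_i$ under the $\psi$-action, and using that $\rho_B$ sends each $s_i$ to $Bs_iB^{-1}$, which together with its $G$-translates under the $\phi$-action generates the same pair as the $s_i$ do under the $\phi$-action (because $Bs_iB^{-1}$ is itself a generic traceless matrix, so relabelling the algebraically independent entries gives an automorphism of $A$ commuting with everything), one obtains a $G$-pair isomorphism $(R_\psi, S_\psi) \cong (R_\phi, S_\phi)$.

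Finally, I would invoke the standard dictionary: the ideal of weak $G$-identities $Id^G(M_2(\mathbb{C}), sl_2(\mathbb{C}))$ with respect to an action equals the kernel of the canonical $G$-epimorphism from the free $G$-pair $(F\langle X; G\rangle, L\langle X;G\rangle)$ onto the corresponding relatively free pair. Since a $G$-pair isomorphism between the two relatively free pairs which is compatible with the canonical projections forces the two kernels to coincide, the two actions yield the same ideal of weak $G$-identities. To make the compatibility precise, I would check that the composite $F\langle X;G\rangle \to (R_\psi,S_\psi) \xrightarrow{\rho_B} (R_\phi, S_\phi)$ agrees with the canonical projection for the $\phi$-action after the harmless relabelling of variables, which is immediate once one traces where $x_i$ goes.

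The main obstacle, and the only point requiring genuine care, is the bookkeeping around the generators: one must be sure that applying $\rho_B$ to the generic matrices $s_i$ produces another admissible set of generic traceless matrices, so that the induced map on relatively free pairs is legitimately an isomorphism and not merely a homomorphism. This is exactly the elementary observation flagged in the text — conjugation by an invertible matrix is an automorphism — but spelled out at the level of the coefficient algebra $A$: the linear change of coordinates on $\mathbb{C}^3$ (the space of traceless matrices) induced by $\rho_{B^{-1}}$ is invertible, hence extends to an automorphism of $A$, and this is what guarantees that no weak identities are gained or lost. Everything else is routine diagram-chasing with the universal properties of free $G$-pairs.
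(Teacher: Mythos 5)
Your proof is correct, and its engine is exactly the one the paper flags: conjugation $\rho_B$ by the invertible matrix $B$ is an automorphism of the pair $(M_2(\mathbb{C}),sl_2(\mathbb{C}))$ that intertwines the two actions, since $\phi(g)=B\psi(g)B^{-1}$ gives $\rho_B(M^{\psi(g)})=(\rho_B(M))^{\phi(g)}$. Where you differ from the paper (which follows Berele's Lemma~2) is in the packaging: the reference argument works directly with substitutions --- if $f$ is a $G$-identity for the $\psi$-action, then for any $a_1,\dots,a_n\in sl_2(\mathbb{C})$ one has $f(a_1^{\phi(g_1)},\dots,a_n^{\phi(g_n)})=B\,f\bigl((B^{-1}a_1B)^{\psi(g_1)},\dots,(B^{-1}a_nB)^{\psi(g_n)}\bigr)B^{-1}=0$, using only that $B^{-1}a_iB$ is again traceless --- whereas you route the same computation through the relatively free pairs $(R_\psi,S_\psi)\cong(R_\phi,S_\phi)$ and compare kernels of the canonical projections. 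Your version is sound (the point you rightly single out, that $Bs_iB^{-1}$ is again a family of generic traceless matrices because the entries undergo an invertible linear change of coordinates extending to an automorphism of $A$, is the only place where care is needed), but it buys nothing over the two-line substitution argument and imports the machinery of generic matrices that the lemma is meant to precede; the direct version also works verbatim for arbitrary pairs $(A,L)$ with inner automorphisms, so no generality is gained either.
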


Let $G=\{g_1, \ldots, g_n\}$ be a finite abelian group of order $n$ and denote by $\widehat{G}=\{\chi_1,\ldots,\chi_n\}$ the set of all its irreducible characters over $\mathbb{C}$. Since $G$ is abelian, $\widehat{G}$ is a group with product given by $\chi_1\cdot\chi_2(g)=\chi_1(g)\cdot\chi_2(g)$ and $G\cong \widehat{G}$, see for example \cite[pp. 439]{Kostrikin}.
Let $A$ be a $G$-algebra and let $e_1$, \dots, $e_n$ be the minimal idempotents of the group algebra $ \mathbb{C} G$. One can choose $e_i= (1/n)\sum_{k=1}^n \chi_i(g_k^{-1})g_k$. Consider the subspaces $A^{(\chi_i )}=\{a\in A\mid a^g=\chi_i(g)a,\,g\in G\}$. Since $1=\sum_{i=1}^n e_i$ we have $A=\sum_{\chi\in \wh{G}} A^{(\chi)}$. Indeed, it is a $\wh{G}$-grading for $A$ and $A^{(\chi_i)}=\{a\in A\mid a^{e_i}=a\}$. Now, suppose that $A=\oplus_{g\in G}A^{(g)}$ is a $G$-grading on $A$. Given an irreducible character $\chi\in \wh{G}$, we define a function (with certain abuse of notation we denote it by the same letter $\chi$), $\chi\colon A\rightarrow A\in Aut(A)$ as follows:
 \[
 \chi(a)=\sum \chi(g)a_g,
\]
where $a_g\in A^{(g)}$ for each $g\in G$. It can be readily seen that this defines a $G$-action on $A$. Hence a $G$-grading gives us a $\wh{G}$-action and vice versa. For more details, see \cite[Theorem 3.2.1]{GMZ}.

\begin{example}\label{exemploBerele}
 Let $g$ be the automorphism of $M_2( \mathbb{C})$ given by $g(x)=BxB^{-1}$, where $B=e_{11}-e_{22}$. Thus $G=\{1,g\}\cong  \mathbb{Z}_2$ acts on $M_2( \mathbb{C})$ and this action induces the $ \mathbb{Z}_2$-grading $M_2( \mathbb{C})=M_2^{(\chi_1)}\oplus M_2^{(\chi_2)}$, where $M_2^{(\chi_1)}=\{a\in M_2( \mathbb{C})\mid g(a)=a\}$ and $M_2^{(\chi_2)}=\{a\in M_2( \mathbb{C})\mid g(a)=-a\}$. Note that if $a=pe_{11}+qe_{12}+re_{21}+se_{22}$, then $g(a)=a$ occurs only when $q=r=0$. On the other hand, $g(a)=-a$ is equivalent to the equality $p=s=0$, that is, the $\wh{G}$-grading obtained above is the natural $ \mathbb{Z}_2$-grading on $M_2(\mathbb{C})$.

\end{example}

We consider again a finite abelian subgroup $G\subseteq Aut(A)$. By definition, we know that $G$ acts as an automorphisms group on the free $G$-algebra $ \mathbb{C}\langle X;G\rangle$ and therefore $ \mathbb{C}\langle X;G\rangle=\oplus_{\chi\in \wh{G}}  \mathbb{C}\langle X;G\rangle^{(\chi)}$ is a $\wh{G}$-grading. Considering the action and the grading mentioned above,  Giambruno, Mishchenko and Zaicev \cite{GMZ} obtained the following results.

\begin{theorem}\label{Dualidade1}
 Let $A$ be a $ \mathbb{C}$-algebra and let $G$ be a finite abelian group. Every $G$-grading on $A$ defines a $\wh{G}$-action on $A$ by automorphisms and vice versa. In this action, a vector subspace $V\subseteq A$ is a $G$-graded subspace if and only if $V$ is invariant under the $\wh{G}$-action. An element $a\in A$ is homogeneous in the  $G$-grading if and only if $a$ is an eigenvector for some $\chi \in \wh{G}$.
\end{theorem}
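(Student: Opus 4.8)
The plan is to spell out the two passages indicated just above the statement — from a $G$-grading to a $\wh{G}$-action and back — and then to check that they are mutually inverse; the two ``if and only if'' assertions come out immediately afterwards.

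First I would verify that the grading-to-action assignment is well defined. Given $A=\oplus_{g\in G}A^{(g)}$ and $\chi\in\wh{G}$, the map $\chi\colon A\to A$, $\chi(a)=\sum_g\chi(g)a_g$, is an algebra automorphism: for homogeneous $a\in A^{(g)}$ and $b\in A^{(h)}$ we have $\chi(ab)=\chi(gh)ab=\chi(g)\chi(h)ab=\chi(a)\chi(b)$, using that $\chi$ is multiplicative on $G$ and that $A^{(g)}A^{(h)}\subseteq A^{(gh)}$, while the conjugate character $\bar{\chi}=\chi^{-1}$ furnishes an inverse because the values $\chi(g)$ are roots of unity; moreover $(\chi_1\chi_2)(a)=\sum_g\chi_1(g)\chi_2(g)a_g=\chi_1(\chi_2(a))$, so $\chi\mapsto\chi$ is a homomorphism $\wh{G}\to Aut(A)$, i.e.\ a $\wh{G}$-action. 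Conversely, given a $\wh{G}$-action by automorphisms, I would apply the idempotent recipe recalled above with $G$ replaced by $\wh{G}$: for $\psi\in\wh{\wh{G}}$ set $e_\psi=\frac{1}{|\wh{G}|}\sum_\chi\overline{\psi(\chi)}\,\chi\in\mathbb{C}\wh{G}$ and $A_\psi=\{a\mid\chi(a)=\psi(\chi)a\ \text{for all }\chi\in\wh{G}\}$; then $A=\oplus_\psi A_\psi$ because $\sum_\psi e_\psi=1$ and each $e_\psi$ acts as the projection onto $A_\psi$, and $A_{\psi_1}A_{\psi_2}\subseteq A_{\psi_1\psi_2}$ because every $\chi$ is an automorphism. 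Through the canonical Pontryagin isomorphism $\wh{\wh{G}}\cong G$, $g\mapsto(\chi\mapsto\chi(g))=:\mathrm{ev}_g$, this is a $G$-grading with $A^{(g)}:=A_{\mathrm{ev}_g}$.

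Next I would check that the two passages are inverse to one another. Starting from a $G$-grading, building the action $\chi(a)=\sum_g\chi(g)a_g$ and then the eigenspace $A_{\mathrm{ev}_g}$: an element $a=\sum_h a_h$ lies in $A_{\mathrm{ev}_g}$ iff $\chi(h)a_h=\chi(g)a_h$ for all $h$ and all $\chi$, hence (the characters of the finite abelian group $G$ separate its points, so $\chi(h)\neq\chi(g)$ for some $\chi$ whenever $h\neq g$) iff $a_h=0$ for $h\neq g$, that is, iff $a\in A^{(g)}$; so the original grading is recovered, and the reverse round trip is the same computation with $\wh{G}$ in place of $G$. It then remains only to read off the last two assertions: a graded subspace $V=\oplus_g(V\cap A^{(g)})$ is $\wh{G}$-invariant since $\chi\bigl(\sum_g v_g\bigr)=\sum_g\chi(g)v_g\in V$, whereas a $\wh{G}$-invariant $V$ is stable under each idempotent $e_\psi$ and hence equals $\oplus_\psi(V\cap A_\psi)$, a graded subspace; and $a\in A^{(g)}$ is an eigenvector of every $\chi$ with eigenvalue $\chi(g)$, whereas if $a\neq0$ satisfies $\chi(a)=\lambda_\chi a$ for all $\chi$ then the homomorphism property of the action makes $\chi\mapsto\lambda_\chi$ a character of $\wh{G}$, so $a$ sits in the corresponding $A_\psi=A^{(g)}$ and is homogeneous.

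I do not anticipate a genuinely hard step: the whole thing is bookkeeping with characters of a finite abelian group. The one point that actually uses the hypotheses is that the two constructions are \emph{honest} mutual inverses, not merely a correspondence up to isomorphism, and this is precisely where one needs $\mathbb{C}$ to contain enough roots of unity and $|G|$ to be invertible in $\mathbb{C}$ — so that the characters of $G$ separate its points and $\wh{\wh{G}}\cong G$ canonically.
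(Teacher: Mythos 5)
Your proof is correct and follows essentially the route the paper itself indicates: the paper states this theorem without proof (citing [GMZ]) but sketches exactly your two constructions in the preceding paragraph --- the idempotents $e_i=\tfrac{1}{n}\sum_k\chi_i(g_k^{-1})g_k$ giving the eigenspace decomposition in one direction and the automorphisms $\chi(a)=\sum_g\chi(g)a_g$ in the other --- and you supply the routine verifications plus the check that the two passages are mutually inverse. The only point worth noting is that in the last assertion you implicitly (and correctly) read ``$a$ is an eigenvector for some $\chi\in\wh{G}$'' as ``$a$ is a common eigenvector for all $\chi\in\wh{G}$,'' which is the reading needed for the converse implication when $G$ is not cyclic.
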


\begin{lemma}\label{acao_graduacao}
 Let $A$ be an algebra and let $\phi\colon  \mathbb{C}\langle X;G\rangle\rightarrow A$ be a homomorphism, where $G\subseteq Aut(A)$ is a finite abelian group. Then the following conditions are equivalent:
 \begin{itemize}
  \item [a)] $\phi$ is a homomorphism of $G$-algebras;
  \item [b)] $\phi$ is a homomorphism of $\wh{G}$-graded algebras.
 \end{itemize}
 \end{lemma}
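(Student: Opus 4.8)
The plan is to observe that both conditions in the statement are just reformulations of the compatibility of $\phi$ with the eigenspace decompositions attached to the $G$-actions, so the proof reduces to bookkeeping. First I would recall the two relevant $\wh{G}$-gradings: the free algebra $\mathbb{C}\langle X;G\rangle$ carries its natural $G$-action, which by the discussion preceding Theorem~\ref{Dualidade1} produces the $\wh{G}$-grading $\mathbb{C}\langle X;G\rangle=\oplus_{\chi\in\wh{G}}\mathbb{C}\langle X;G\rangle^{(\chi)}$ with $\mathbb{C}\langle X;G\rangle^{(\chi)}=\{f\mid f^g=\chi(g)f\text{ for all }g\in G\}$; likewise the given action $G\subseteq Aut(A)$ produces $A=\oplus_{\chi\in\wh{G}}A^{(\chi)}$, where $A^{(\chi)}$ is the $\chi$-eigenspace. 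These are the gradings meant in condition b), and the whole argument is carried out with respect to them.

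For a)$\Rightarrow$b) I would assume $\phi(f^g)=\phi(f)^g$ for all $g\in G$ and all $f$, and take $f\in\mathbb{C}\langle X;G\rangle^{(\chi)}$. Then for every $g\in G$ one gets $\phi(f)^g=\phi(f^g)=\phi(\chi(g)f)=\chi(g)\phi(f)$, so $\phi(f)\in A^{(\chi)}$. Hence $\phi$ sends each homogeneous component of the domain into the component of the same degree in the codomain, i.e.\ $\phi$ is a homomorphism of $\wh{G}$-graded algebras.

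For b)$\Rightarrow$a) I would assume $\phi(\mathbb{C}\langle X;G\rangle^{(\chi)})\subseteq A^{(\chi)}$ for every $\chi\in\wh{G}$ and, given an arbitrary $f\in\mathbb{C}\langle X;G\rangle$, decompose $f=\sum_{\chi\in\wh{G}}f_\chi$ with $f_\chi\in\mathbb{C}\langle X;G\rangle^{(\chi)}$. For $g\in G$ we have $f^g=\sum_\chi f_\chi^g=\sum_\chi\chi(g)f_\chi$, hence $\phi(f^g)=\sum_\chi\chi(g)\phi(f_\chi)$; on the other hand $\phi(f_\chi)\in A^{(\chi)}$ gives $\phi(f_\chi)^g=\chi(g)\phi(f_\chi)$, so $\phi(f)^g=\sum_\chi\phi(f_\chi)^g=\sum_\chi\chi(g)\phi(f_\chi)=\phi(f^g)$. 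Since this holds for every $g\in G$, $\phi$ is a homomorphism of $G$-algebras.

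I do not expect any serious obstacle here; the only point requiring a little care is the identification, via Theorem~\ref{Dualidade1}, of the $\wh{G}$-gradings on the domain and codomain with the respective $G$-eigenspace decompositions. Equivalently, one may phrase everything through the minimal idempotents $e_i=(1/n)\sum_k\chi_i(g_k^{-1})g_k$ of $\mathbb{C}G$: membership in $\mathbb{C}\langle X;G\rangle^{(\chi_i)}$ is characterized by $f^{e_i}=f$, and both conditions a) and b) then say exactly that $\phi$ commutes with each $e_i$, whence the equivalence is immediate by linearity.
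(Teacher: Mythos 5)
Your proof is correct. The paper itself states this lemma without proof, citing it from Giambruno--Mishchenko--Zaicev \cite{GMZ}, and your argument---identifying the $\wh{G}$-gradings on both sides with the eigenspace decompositions of the respective $G$-actions and checking both implications by decomposing into homogeneous components (equivalently, via the idempotents $e_i$)---is exactly the standard argument one would give.
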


As a consequence of Theorem \ref{Dualidade1} and Lemma \ref{acao_graduacao}, the authors of \cite{GMZ} proved the following result.
\begin{proposition}\label{Dualidade2}
 Let $G\subseteq Aut(A)$ be a finite abelian group and consider the $\wh{G}$-grading $ \mathbb{C}\langle X;G\rangle=\oplus_{\chi\in \wh{G}}  \mathbb{C}\langle X;G\rangle^{(\chi)}$. Then $ \mathbb{C}\langle X;G\rangle$ is the free $\wh{G}$-graded algebra of countable rank, freely generated by $\wt{X}=\{x^{e_i}\mid x\in X, i=1,\ldots,n\}$. Furthermore $id^G(A)=id^{gr}(A)$, where $id^{gr}(A)$ denotes the ideal of $\wh{G}$-graded identities of $A$.
\end{proposition}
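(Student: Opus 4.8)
The plan is to prove the two assertions of the proposition in turn, the first by a change of basis inside the group algebra $\mathbb{C}G$ and the second by invoking Lemma~\ref{acao_graduacao}. First I would show that $\wt{X}$ is a free generating set of $\mathbb{C}\langle X;G\rangle$ as an ordinary associative algebra. By construction $\mathbb{C}\langle X;G\rangle$ is the tensor algebra of the vector space $V$ spanned by $\{x^g\mid x\in X,\ g\in G\}$, and $\{x^g\}$ is one basis of $V$. Since $e_i=(1/n)\sum_{k=1}^n\chi_i(g_k^{-1})g_k$, we get $x^{e_i}=(1/n)\sum_{k=1}^n\chi_i(g_k^{-1})x^{g_k}$, so for each fixed $x$ the passage from $\{x^{g_1},\dots,x^{g_n}\}$ to $\{x^{e_1},\dots,x^{e_n}\}$ is given by the matrix $\big(\tfrac1n\chi_i(g_k^{-1})\big)_{i,k}$, which is invertible by the orthogonality relations for the irreducible characters of $G$ (this is where one uses $G\cong\wh{G}$, so that the matrix is square). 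Hence $\wt{X}$ is another basis of $V$, and since the tensor algebra depends only on $V$, we obtain $\mathbb{C}\langle X;G\rangle=\mathbb{C}\langle\wt{X}\rangle$, the free associative algebra on the alphabet $\wt{X}$.

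Next I would identify the $\wh{G}$-grading with the natural grading of $\mathbb{C}\langle\wt{X}\rangle$ in which $x^{e_i}$ has degree $\chi_i$. A direct computation, using that $\chi_i$ is a homomorphism $G\to\mathbb{C}^{\times}$, gives $(x^{e_i})^g=\chi_i(g)\,x^{e_i}$, so each $x^{e_i}$ lies in $\mathbb{C}\langle X;G\rangle^{(\chi_i)}$; consequently every monomial $x_{j_1}^{e_{i_1}}\cdots x_{j_m}^{e_{i_m}}$ lies in $\mathbb{C}\langle X;G\rangle^{(\chi_{i_1}\cdots\chi_{i_m})}$. As these monomials form a basis of $\mathbb{C}\langle X;G\rangle$ by the first step, the homogeneous component $\mathbb{C}\langle X;G\rangle^{(\chi)}$ is spanned exactly by those monomials whose letters have degrees multiplying to $\chi$, which is precisely the $\chi$-component of the free graded algebra on $\wt{X}$. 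Freeness in the graded category is then formal: given a $\wh{G}$-graded algebra $B=\oplus_{\chi}B^{(\chi)}$ and elements $b_{x,i}\in B^{(\chi_i)}$, ordinary freeness of $\mathbb{C}\langle\wt{X}\rangle$ yields a unique algebra homomorphism with $x^{e_i}\mapsto b_{x,i}$, and it is graded because it sends a basis monomial of degree $\chi$ to a product of homogeneous elements of total degree $\chi$. This proves the first assertion.

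For the equality $id^G(A)=id^{gr}(A)$ I would argue as follows. By the universal property of $\mathbb{C}\langle X;G\rangle$ as the free $G$-algebra, $id^G(A)$ is the intersection of the kernels of all $G$-algebra homomorphisms $\mathbb{C}\langle X;G\rangle\to A$; by the graded universal property just established, $id^{gr}(A)$ is the intersection of the kernels of all $\wh{G}$-graded algebra homomorphisms $\mathbb{C}\langle X;G\rangle\to A$. Lemma~\ref{acao_graduacao} asserts that these two classes of homomorphisms coincide, hence so do the two intersections of kernels, and we are done. I expect the only real obstacle to be the bookkeeping in the first two steps — namely, checking that the abstractly defined eigenspaces $\mathbb{C}\langle X;G\rangle^{(\chi)}$ really coincide with the span of the monomials of a prescribed multidegree in the new alphabet $\wt{X}$; once this identification is in place, both the graded universal property and the equality of the ideals of identities follow mechanically from Lemma~\ref{acao_graduacao} and the two universal properties.
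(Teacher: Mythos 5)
Your proof is correct and takes essentially the route the paper intends: the paper gives no proof of Proposition~\ref{Dualidade2}, merely noting it is a consequence of Theorem~\ref{Dualidade1} and Lemma~\ref{acao_graduacao} as established in \cite{GMZ}, and your argument --- the invertible character matrix showing $\wt{X}$ is another free generating set, the identification of the $\chi$-eigenspaces with the spans of monomials of multidegree $\chi$, and Lemma~\ref{acao_graduacao} to match $G$-homomorphisms with graded homomorphisms so that the two intersections of kernels coincide --- is precisely that deduction written out in full. No gaps.
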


If $A=\sum_{g\in G}A^{(g)}$ is a $G$-graded algebra and $L\subseteq A^{(-)}$ is a Lie subalgebra of $A^{(-)}$ that inherits the grading of $A$, that is, $L=\sum_{g\in G}(L\cap A^{(g)})$, we saw that the pair $(A,L)$ is a $G$-graded pair. Now, if $A$ is a $G$-algebra, then a Lie subalgebra $L\subseteq A^{(-)}$ is invariant by the $G$-action if and only if $L$ inherits the $\wh{G}$-grading, that is, a pair $(A,L)$ is a $G$-pair if only if it is a $\wh{G}$-graded pair. Since $Aut(A, L)\subseteq Aut(A)$ and $L\langle X;G\rangle$ inherits the $\wh{G}$-grading of the free $G$-algebra $ \mathbb{C}\langle X;G\rangle=\oplus_{\chi\in \wh{G}}  \mathbb{C}\langle X;G\rangle^{(\chi)}$, Theorem \ref{Dualidade1} and Lemma \ref{acao_graduacao} remain valid if we consider pairs $(A,L)$ instead of algebras and the proof of the following proposition repeats verbatim that of Proposition \ref{Dualidade2}.

\begin{proposition}\label{Dualidade2_fraca}
 Let $G\subseteq Aut(A, L)$ be a finite abelian group and consider the free $G$-pair $( \mathbb{C}\langle X;G\rangle, L\langle X;G\rangle)$. Then $( \mathbb{C}\langle X;G\rangle, L\langle X;G\rangle)$ is the free $\wh{G}$-graded pair of countable rank, freely generated by the set $\wt{X}=\{x^{e_i}\mid x\in X, i=1,\ldots,n\}$. Moreover, $id^G(A,L)=id^{gr}(A,L)$, where $id^{gr}(A,L)$ denotes the ideal of $\wh{G}$-graded identities of the pair $(A,L)$.
\end{proposition}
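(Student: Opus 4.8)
The plan is to mimic the proof of Proposition~\ref{Dualidade2} (the associative-algebra version from \cite{GMZ}) almost verbatim, which is legitimate because all the structural ingredients it uses have already been extended to the setting of pairs in the paragraph preceding the statement: $Aut(A,L)\subseteq Aut(A)$, the free Lie $G$-algebra $L\langle X;G\rangle$ inherits the $\wh G$-grading of $\mathbb C\langle X;G\rangle$, and Theorem~\ref{Dualidade1} together with Lemma~\ref{acao_graduacao} hold for pairs. So the real content is just to check that nothing in the original argument actually needed the associative (or even algebra) structure rather than the $G$-module / graded-vector-space structure carried along on both components.

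First I would establish the freeness statement. One verifies that the family $\wt X=\{x^{e_i}\mid x\in X,\ i=1,\dots,n\}$ consists of $\wh G$-homogeneous elements: each $e_i$ is the minimal idempotent of $\mathbb C G$ attached to $\chi_i$, and by the construction recalled before Example~\ref{exemploBerele} the element $x^{e_i}$ lies in $\mathbb C\langle X;G\rangle^{(\chi_i)}$, hence $(x^{e_i})$ is homogeneous of degree $\chi_i$ in both $\mathbb C\langle X;G\rangle$ and, when applicable, in $L\langle X;G\rangle$ (which inherits the grading). Then, given any $\wh G$-graded pair $(B,M)$ and any map sending each $x^{e_i}$ to a homogeneous element of $M$ of degree $\chi_i$, I would use the universal property of the free $G$-pair $(\mathbb C\langle X;G\rangle, L\langle X;G\rangle)$ to produce a $G$-pair homomorphism: a $\wh G$-graded pair is, by the pairs-version of Theorem~\ref{Dualidade1}, the same datum as a $G$-pair, so the prescribed assignment defines a $G$-action-compatible map on the generators $x=x^e=\sum_i x^{e_i}$, which extends uniquely. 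By Lemma~\ref{acao_graduacao} (for pairs) this $G$-homomorphism is exactly a $\wh G$-graded homomorphism, and one checks it is the unique one extending the given assignment because the $x^{e_i}$ generate $\mathbb C\langle X;G\rangle$ as a $\wh G$-graded algebra (they span the same subspace as the $x^g$, since the change of basis between $\{g\}$ and $\{e_i\}$ in $\mathbb C G$ is invertible). This gives the universal property defining the free $\wh G$-graded pair.

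Next, the identity of ideals. For this I would invoke Lemma~\ref{acao_graduacao} in the form already stated for pairs: a homomorphism $\mathbb C\langle X;G\rangle\to A$ (carrying $L\langle X;G\rangle$ into $L$) is a homomorphism of $G$-pairs if and only if it is a homomorphism of $\wh G$-graded pairs. Since $id^G(A,L)$ is the intersection of the kernels of all $G$-pair homomorphisms into $(A,L)$, and $id^{gr}(A,L)$ is the intersection of the kernels of all $\wh G$-graded-pair homomorphisms into $(A,L)$, and these two families of homomorphisms coincide, the two ideals coincide. (One should note the evaluations used to define weak $G$-identities are precisely evaluations on $L$, i.e. substitutions of the variables by elements of $L$, which is exactly what a pair-homomorphism into $(A,L)$ records.)

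The main obstacle, such as it is, is bookkeeping rather than mathematics: one must be careful that every place where \cite{GMZ} invokes multiplication in the free algebra is either (i) a statement about the $\wh G$-grading being a grading of the Lie product on $L\langle X;G\rangle$ — which holds by definition of a graded pair and is already granted in the excerpt — or (ii) a statement about spanning/change of basis in $\mathbb C G$, which is insensitive to the pair structure. Once this has been audited, the proof is, as the statement already advertises, a verbatim repetition of that of Proposition~\ref{Dualidade2}, so I would simply write: ``The proof repeats verbatim that of Proposition~\ref{Dualidade2}, using Theorem~\ref{Dualidade1} and Lemma~\ref{acao_graduacao} for pairs in place of their algebra versions, and the fact that $L\langle X;G\rangle$ inherits the $\wh G$-grading of $\mathbb C\langle X;G\rangle$.''
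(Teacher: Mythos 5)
Your proposal is correct and follows essentially the same route as the paper, which itself only states that the argument of Proposition~\ref{Dualidade2} carries over verbatim once Theorem~\ref{Dualidade1} and Lemma~\ref{acao_graduacao} are extended to pairs. Your write-up simply fills in the details of that verbatim transfer (homogeneity of the $x^{e_i}$, the universal property, and the identification of $G$-pair homomorphisms with graded-pair homomorphisms), all of which is consistent with the intended proof.
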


Let $D$ be an infinite integral domain. The next proposition (see \cite{KC}) provides a basis of $ \mathbb{Z}_2$-graded identities for the pair $(M_2(D),sl_2(D))$ with the natural $ \mathbb{Z}_2$-grading, that is, $M_2(D)=M^{(0)}\oplus M^{(1)}$, where $M^{(0)}$ is the set of the $2\times 2$ diagonal matrices, $M^{(1)}$ is the set of off-diagonal matrices, and $sl_2(D)=(M^{(0)}\cap sl_2)\oplus (M^{(1)}\cap sl_2)$. Let $Y=\{y_1, y_2,\ldots\}$, $Z=\{z_1,z_2,\ldots\}$ be disjoint infinite sets and let $D\langle X\rangle$ be the free associative $ \mathbb{Z}_2$-graded algebra freely generated by $X=Y\cup Z$, where the elements of $Y$ and $Z$ are regarded as even (of degree 0) and odd (of degree 1), respectively. Sometimes, when no ambiguity arises, we will omit the subscripts of the variables $y_i$ and $z_i$.

\begin{proposition}\label{Prop}
 Let $D$ be an infinite unital domain, and let $M_2(D)$ be the algebra of the  $2\times 2$ matrices equipped with its  natural $ \mathbb{Z}_2$-grading. Then the following graded identities form a basis of the $ \mathbb{Z}_2$-graded identities for the pair $(M_2(D),sl_2(D))$.
 \begin{enumerate}
  \item $[y_1,y_2]$,
  \item $z_1z_2z_3-z_3z_2z_1$,
  \item $yz+zy=0$.
 \end{enumerate}
\end{proposition}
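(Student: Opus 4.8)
The plan is to show first that the three listed polynomials are indeed graded weak identities of the pair, and then that they generate the whole graded $T$-ideal $Id^{gr}(M_2(D),sl_2(D))$. For the first part, a direct substitution suffices: writing an even variable $y$ as a diagonal traceless matrix $\mathrm{diag}(\alpha,-\alpha)$ and an odd variable $z$ as an off-diagonal traceless matrix $\beta e_{12}+\gamma e_{21}$, one checks that two diagonal matrices commute (identity (1)); that for off-diagonal traceless matrices the product $z_1z_2z_3$ is again off-diagonal and equals $z_3z_2z_1$ (identity (2), which follows from the fact that $e_{12}e_{21}e_{12}=e_{12}$, $e_{21}e_{12}e_{21}=e_{21}$, etc., together with tracelessness); and that $yz+zy=0$ because $\mathrm{diag}(\alpha,-\alpha)$ anticommutes with any off-diagonal matrix. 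These are the verifications, none of them an obstacle.

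The substantive direction is that every graded weak identity of the pair is a consequence of (1)--(3). First I would reduce to multilinear polynomials: since $D$ is an infinite domain, the graded $T$-ideal is generated by its multilinear elements, so it is enough to analyze multilinear $f$ in even variables $y_1,\dots,y_k$ and odd variables $z_1,\dots,z_m$. Using (1) I can collect all the even variables; using (3) I can move every even variable past every odd variable at the cost of a sign, so modulo the ideal generated by (1)--(3) every multilinear $f$ is congruent to a linear combination of monomials of the form $y_{i_1}\cdots y_{i_k}\, w$ where $w$ is a multilinear monomial in $z_1,\dots,z_m$ alone (the even part fully factored out and ordered). Then (2) lets me normalize the odd word $w$: since $z_az_bz_c\equiv z_cz_bz_a$, any such word of length $\ge 3$ can be rewritten, and one shows that modulo the relations the space of multilinear odd words of degree $m$ is spanned by very few monomials (essentially $z_1z_2\cdots z_m$ and $z_2z_1z_3z_4\cdots z_m$, reflecting that the relation $z_az_bz_c-z_cz_bz_a$ allows reversing any length-three factor). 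Concretely, the claim becomes that after these reductions the multilinear part of $M_2(D)$ — or rather of the pair — is a free module on this small explicit set of monomials, so that a polynomial vanishing on $sl_2(D)$ and written in normal form must have all coefficients zero.

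The key step, and the main obstacle, is the last one: proving that the explicit normal-form monomials are linearly independent as functions on $sl_2(D)$, i.e. that no further relation holds. Here I would substitute the generic traceless matrices $s_i=\begin{pmatrix} a_i & b_i\\ c_i & -a_i\end{pmatrix}$ into the normal-form monomials and compute the resulting entries in the polynomial algebra $D[a_i,b_i,c_i]$; a relation among the monomials would force a polynomial relation among these entries. Exploiting the multilinearity and the $ \mathbb{Z}_2$-grading, one isolates, for each fixed multidegree, the leading monomial (for a suitable term order on the $a_i,b_i,c_i$) coming from exactly one normal-form monomial, which shows the images are linearly independent over $D$. This is essentially the argument of \cite{KC}, and the bookkeeping of which generic entry detects which monomial is where the real work lies; everything before it is formal manipulation with the three relations.
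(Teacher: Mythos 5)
You should first note that the paper does not actually prove Proposition \ref{Prop}: it is imported wholesale from \cite{KC}, so there is no in-text argument to compare with and your sketch has to stand on its own. The easy direction is fine (one small slip: tracelessness is irrelevant for identity (2), where only commutativity of $D$ and the off-diagonal shape matter, whereas it is exactly what makes identity (3) hold). The overall strategy for the converse --- normal form modulo (1)--(3), then linear independence of the normal forms under evaluation on generic matrices --- is the right one and is essentially what \cite{KC} does. But two of your concrete claims are wrong, and the second one guts the key step.

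First, the reduction ``since $D$ is an infinite domain, the graded $T$-ideal is generated by its multilinear elements'' is false in prime characteristic; an infinite domain only guarantees generation by \emph{multihomogeneous} elements, and the proposition is asserted for arbitrary infinite domains. This is repairable (the even part of the normal form becomes $y_1^{n_1}\cdots y_k^{n_k}$ and nothing else changes), but as stated the reduction does not hold. Second, and more seriously, your normal form for the odd words is wrong. The consequences of $z_1z_2z_3=z_3z_2z_1$ (substituting odd monomials for the $z_i$) identify two words $z_{i_1}\cdots z_{i_m}$ exactly when they have the same multiset of indices in odd positions and the same multiset in even positions; a spanning set of representatives is the set of words with $i_1<i_3<i_5<\cdots$ and $i_2<i_4<\cdots$, of cardinality $\binom{m}{\lceil m/2\rceil}$ in the multilinear case --- not two. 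Already for $m=3$ the six words fall into the three classes $\{123,321\}$, $\{132,231\}$, $\{213,312\}$, whose evaluations $b_{i_1}c_{i_2}b_{i_3}e_{12}+c_{i_1}b_{i_2}c_{i_3}e_{21}$ are visibly linearly independent, so your proposed two-element spanning set $\{z_1\cdots z_m,\ z_2z_1z_3\cdots z_m\}$ cannot span. The real content of the hard direction is precisely the linear independence of this exponentially large family, read off from the alternating $b/c$-pattern of the matrix entries of each class representative; by collapsing the spanning set to two monomials you have made the independence step vacuous for the wrong set, and the argument does not close as written.
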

We recall that in \cite{KC} it was proved that it was also proved that the $ \mathbb{Z}_2$-graded identities for the pair $(M_2(D),sl_2(D))$ satisfy the Specht property. Since we will not need this theorem here, we refer the interested reader to \cite{KC}.

\section{The cyclic groups}
Here we follow ideas from \cite{Berele} and \cite{KM}. Suppose $G= \mathbb{Z}_n$ is generated by $g$. Since $g$ has order $n$, it follows that $g$ acts via conjugation by a diagonalizable matrix $\phi(g)$. Thus, by Lemma \ref{mudar}, we can assume that $\phi(g)=\alpha e_{11}+\beta e_{22}$, where $\alpha$, $\beta\neq 0$. However, multiplying $\phi(g)$ by non-zero scalars does not change the action and therefore, multiplying this matrix by $1/\alpha$, we can assume that $g$ acts by conjugation through a matrix of the form $\phi( g)=e_{11}+\omega e_{22}$, where $\omega$ is a primitive $n$-th root of $1$. Therefore $g$ acts on $sl_2( \mathbb{C})$ as follows:
\[
g\matriz{a}{b}{c}{-a}=\matriz{a}{\omega^{-1}b}{\omega c}{-a}.
\]
In particular, if $n=2$, the action is the following:
\[
e\matriz{a}{b}{c}{-a}=\matriz{a}{b}{c}{-a}, \qquad g\matriz{a}{b}{c}{-a}
= \matriz{a}{-b}{-c}{-a}.
\]
Define $\pi_0(x)=e(x)+g(x)$ and $\pi_1(x)=e(x)-g(x)$. From Proposition \ref{Dualidade2_fraca} and Proposition \ref{Prop}, we obtain:

\begin{proposition}
 The following $ \mathbb{Z}_2$-polynomials form a basis of the weak $ \mathbb{Z}_2$-identities for the pair $(M_2(\mathbb{C}),sl_2(\mathbb{C}))$:
 \begin{enumerate}
  \item $[\pi_0(x_1),\pi_0(x_2)]$,
  \item $\pi_1(x_1)\pi_1(x_2)\pi_1(x_3)-\pi_1(x_3)\pi_1(x_2)\pi_1(x_1)$,
  \item $\pi_0(x_1)\pi_1(x_2)+\pi_1(x_2)\pi_0(x_1)$.
 \end{enumerate}
\end{proposition}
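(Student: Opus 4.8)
The plan is to deduce the statement from the duality between $\mathbb{Z}_2$-actions and $\mathbb{Z}_2$-gradings together with Proposition \ref{Prop}. Since $G=\mathbb{Z}_2$ is abelian, $\widehat{G}\cong\mathbb{Z}_2$, and Proposition \ref{Dualidade2_fraca} gives $id^G(M_2(\mathbb{C}),sl_2(\mathbb{C}))=id^{gr}(M_2(\mathbb{C}),sl_2(\mathbb{C}))$, where the grading on $M_2(\mathbb{C})$ is the $\widehat{G}$-grading attached to the action. Thus the whole argument amounts to making this dictionary explicit and translating the three generators of Proposition \ref{Prop}.

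First I would identify the $\widehat{G}$-grading. For $n=2$ the generator $g$ acts by conjugation by $e_{11}-e_{22}$, which is exactly the automorphism of Example \ref{exemploBerele}; hence the induced $\widehat{G}$-grading on $M_2(\mathbb{C})$ is the natural $\mathbb{Z}_2$-grading. As $sl_2(\mathbb{C})$ is $G$-invariant it inherits this grading, namely $sl_2=(M^{(0)}\cap sl_2)\oplus(M^{(1)}\cap sl_2)$, so the $\widehat{G}$-graded pair coincides with the pair of Proposition \ref{Prop} for $D=\mathbb{C}$.

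Next I would match the free objects. The minimal idempotents of $\mathbb{C}G$ are $e_1=\tfrac12(e+g)$ (the trivial character) and $e_2=\tfrac12(e-g)$, so the free generators $\widetilde{X}=\{x^{e_i}\}$ of Proposition \ref{Dualidade2_fraca} are $x^{e_1}=\tfrac12\pi_0(x)$ (even, since $x^{e_1}\in\mathbb{C}\langle X;G\rangle^{(\chi_1)}$) and $x^{e_2}=\tfrac12\pi_1(x)$ (odd). Under the identification $y_i\leftrightarrow x_i^{e_1}$, $z_i\leftrightarrow x_i^{e_2}$, graded endomorphisms of the free $\widehat{G}$-graded algebra correspond to $G$-endomorphisms of $\mathbb{C}\langle X;G\rangle$, so a basis of $\widehat{G}$-graded identities transfers to a basis of weak $G$-identities. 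Substituting $y_i=\tfrac12\pi_0(x_i)$ and $z_i=\tfrac12\pi_1(x_i)$ into the three polynomials of Proposition \ref{Prop} we obtain, up to the nonzero scalars $\tfrac14$, $\tfrac18$, $\tfrac14$ respectively, precisely the three $\mathbb{Z}_2$-polynomials in the statement; since multiplying a generator by a nonzero scalar does not change the $G$-ideal it generates, these scalars may be dropped.

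The argument involves no real obstacle; the only points that need care are (i) that $\pi_0(x)$ and $\pi_1(x)$ are genuinely homogeneous of even, resp. odd, degree in the $\widehat{G}$-grading, which is built into the identity $A^{(\chi_i)}=\{a\mid a^g=\chi_i(g)a\}$ recorded in the Preliminaries, and (ii) that the isomorphism of Proposition \ref{Dualidade2_fraca} really does identify $G$-endomorphisms with graded ones, so that the word ``basis'' transfers unchanged.
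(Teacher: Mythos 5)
Your proposal is correct and follows exactly the route the paper intends: the paper derives this proposition directly from Proposition~\ref{Dualidade2_fraca} and Proposition~\ref{Prop}, and your argument simply makes that deduction explicit (the action by conjugation with $e_{11}-e_{22}$, the induced natural $\mathbb{Z}_2$-grading, the identification $x^{e_1}=\tfrac12\pi_0(x)$, $x^{e_2}=\tfrac12\pi_1(x)$, and the harmless scalars). All the details check out.
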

Now we suppose $G= \mathbb{Z}_n$, $n \geq 3$. Since $G$ is abelian, the group algebra $\mathbb{C} G$ admits a decomposition into a direct sum of minimal ideals $\mathbb{C} G=\oplus  \mathbb{C} e_i$, where $e_i=1/|G|\sum \omega^{-ij}g^j$ is a primitive idempotent. 
Note that
\[
e_0\matriz{a}{b}{c}{-a}=\matriz{a}{0}{0}{-a},\quad e_1\matriz{a}{b}{c}{-a}=\matriz{0}{0}{c}{0},
\]
\[
e_{n-1}\matriz{a}{b}{c}{-a}=\matriz{0}{b}{0}{0}.
\]
In general we have that
\[
e_i\begin{pmatrix}
 a & b\\
 c & -a
\end{pmatrix}= \begin{pmatrix}
 (\sum \omega^{-ij})a & (\sum \omega^{-(i+1)j})b\\
 (\sum \omega^{-(i-1)j})c & -(\sum \omega^{-ij})a
\end{pmatrix}.
\]
But $\omega^{-i}=1$ if and only if $i=0$. On the other hand, $w^{-(i+1)}=1$ only when $i=n-1$. Finally, $\omega^{-(i-1)}=1$ is equivalent to $i=1$.
Therefore, since $\omega$ is a primitive root of $1$, each $\omega^j$ is either $1$ or it is a root of the polynomial $f(x)=1+x+\cdots +x^{n-1}$. This shows that $e_{i}$ vanishes on $sl_2( \mathbb{C})$ if $i \neq 0$, $1$, $n-1$. From now on, we will write $e_{-1}$ instead of $e_{n-1}$. 

\begin{lemma}\label{Lema6} Let $G= \mathbb{Z}_n$, $n\geq 3$ and $\alpha=\pm 1$. The following relations are $G$-identities for the pair $(M_2( \mathbb{C}),sl_2( \mathbb{C}))$:
\begin{enumerate}
 \item $e_0(x)+e_{1}(x)+e_{-1}(x)=x$,
 \item $e_\alpha(x_1)e_\alpha(x_2)=0$,
 \item $[e_0(x_1),e_0(x_2)]=0$,
 \item $e_\alpha(x_1)e_{-\alpha}(x_2)e_\alpha(x_3)=e_\alpha(x_3)e_{-\alpha}(x_2)e_\alpha(x_1)$,
 \item $e_0(x_1)e_\alpha(x_2)+e_\alpha(x_2)e_0(x_1)=0$.
  \end{enumerate}
\end{lemma}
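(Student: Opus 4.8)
The plan is to prove all five relations by direct substitution, using the images of the idempotents on traceless matrices that were computed just before the statement. Recall from that computation that for an arbitrary traceless matrix $s=\matriz{a}{b}{c}{-a}$ one has $e_0(s)=a(e_{11}-e_{22})$, $e_1(s)=c\,e_{21}$, $e_{-1}(s)=b\,e_{12}$, and $e_i(s)=0$ whenever $i\notin\{0,1,-1\}$ (this last vanishing being exactly where the primitivity of $\omega$ enters, through the sums $1+\omega^j+\cdots+\omega^{(n-1)j}=0$ for $j\not\equiv 0$). Since the $e_i$ are orthogonal idempotents with $\sum_i e_i=1$ in $\mathbb{C}G$, relation (1) is immediate: applying $\sum_i e_i=\mathrm{id}$ to $s\in sl_2(\mathbb{C})$ and discarding the vanishing summands leaves $e_0(s)+e_1(s)+e_{-1}(s)=s$.

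For relations (2)--(5) I would substitute arbitrary elements $s_1,s_2,s_3\in sl_2(\mathbb{C})$, write $s_k=\matriz{a_k}{b_k}{c_k}{-a_k}$, and reduce each expression to the multiplication table $e_{ij}e_{kl}=\delta_{jk}e_{il}$ of the matrix units. Then (2) follows from $e_{21}^2=e_{12}^2=0$; relation (3) holds because $e_0(s_1)$ and $e_0(s_2)$ are scalar multiples of the single diagonal matrix $e_{11}-e_{22}$, hence commute; for (4) with $\alpha=1$ one computes $e_1(s_1)e_{-1}(s_2)e_1(s_3)=c_1b_2c_3\,e_{21}e_{12}e_{21}=c_1b_2c_3\,e_{21}$, which is symmetric under interchanging $s_1$ and $s_3$, and the case $\alpha=-1$ is identical using $e_{12}e_{21}e_{12}=e_{12}$; and for (5) with $\alpha=1$ one uses $(e_{11}-e_{22})e_{21}=-e_{21}$ together with $e_{21}(e_{11}-e_{22})=e_{21}$ to get $e_0(s_1)e_1(s_2)+e_1(s_2)e_0(s_1)=a_1c_2(-e_{21}+e_{21})=0$, and similarly for $\alpha=-1$ from $(e_{11}-e_{22})e_{12}=e_{12}$ and $e_{12}(e_{11}-e_{22})=-e_{12}$.

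There is no real obstacle here: the lemma is a finite computation once the three images $e_0(s),e_1(s),e_{-1}(s)$ have been identified. The only points needing a little care are (i) carrying the case split $\alpha=\pm 1$ through all four verifications, and (ii) observing that, because each $e_\alpha$ already annihilates the ``wrong'' entries, it suffices to work inside the three one-dimensional subspaces $\mathbb{C}(e_{11}-e_{22})$, $\mathbb{C}e_{21}$, $\mathbb{C}e_{12}$ and with their pairwise and triple products; no substitution more general than one arbitrary traceless matrix per slot is required. As an alternative to the hands-on check, one may note that under the action/grading duality of Proposition~\ref{Dualidade2_fraca} these relations are exactly the defining relations of the elementary $\mathbb{Z}_n$-grading on the pair $(M_2(\mathbb{C}),sl_2(\mathbb{C}))$ in which $e_{11},e_{22}$ sit in degree $0$, $e_{21}$ in degree $1$, and $e_{12}$ in degree $-1$; but the direct matrix-unit computation above is the quickest route and is the one I would write out.
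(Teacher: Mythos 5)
Your proof is correct and follows exactly the route the paper intends: the paper simply states that the lemma is verified ``by directly evaluating the corresponding $G$-polynomials'' and omits the computation, and your explicit check of all five relations via the images $e_0(s)=a(e_{11}-e_{22})$, $e_1(s)=c\,e_{21}$, $e_{-1}(s)=b\,e_{12}$ and the matrix-unit multiplication table supplies precisely that omitted verification. All the individual computations (including the $\alpha=\pm1$ case split in (2), (4), (5)) check out.
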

\begin{proof}
The proof is straightforward, it consists of directly evaluating the corresponding $G$-polynomials, which is why we omit it.
\end{proof}

Let $B$ be the set of the following monomials: 
\[
e_0(x)^{\underline{n}}e_{\alpha}(x_{i_1})e_{-\alpha}(x_{j_1})\cdots e_{\alpha}(x_{i_k})\widehat{e_{-\alpha}(x_{j_k})},
\]
where $\alpha=\pm 1,\,e_0(x)^{\underline{n}}=e_0(x_1)^{n_1}\cdots e_0(x_r)^{n_r}$, $i_1\leq i_2\leq \cdots\leq i_k, \,j_1\leq j_2\leq\cdots\leq j_k$, and the "hat" $\widehat{e_{-\alpha}(x_{j_k})}$ means that $e_{-\alpha}(x_{j_k})$ can be missing. Denote by $I$ the $G$-ideal of weak $G$-identities generated by the identities of Lemma \ref{Lema6}.
\begin{lemma}\label{Lema7}
 Let B be as above. Then the quotient $\mathbb{C}\langle X;G\rangle/I$ is spanned by the set  $\{ m+I  \mid m\in B \}$.
\end{lemma}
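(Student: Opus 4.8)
The plan is to show that the set $\{m+I\mid m\in B\}$ spans $\mathbb{C}\langle X;G\rangle/I$ by reducing an arbitrary $G$-polynomial modulo $I$ to a linear combination of elements of $B$. Since $I$ is a $G$-ideal, it suffices to treat an arbitrary $G$-monomial $w=y_1^{h_1}\cdots y_t^{h_t}$ in the variables $x_i^g$. First I would use identity (1) of Lemma \ref{Lema6}, namely $e_0(x)+e_1(x)+e_{-1}(x)=x$, together with the fact (established just before the lemma) that $e_i$ vanishes on $sl_2(\mathbb{C})$ for $i\neq 0,1,-1$, to rewrite every letter $x_i^g$ of $w$ as a $\mathbb{C}$-linear combination of $e_0(x_i)$, $e_1(x_i)$, $e_{-1}(x_i)$; more precisely, since $g^j$ acts as the operator $\chi$-type map, each $x^{g^j}$ equals $e_0(x)+\omega^j e_1(x)+\omega^{-j}e_{-1}(x)$ on the pair, and this relation lies in $I$. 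Expanding $w$ by multilinearity, we are reduced to spanning by monomials whose letters are all of the form $e_\epsilon(x_i)$ with $\epsilon\in\{0,1,-1\}$.

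Next I would normalize such a monomial $m=u_1u_2\cdots u_t$, where each $u_\ell=e_{\epsilon_\ell}(x_{k_\ell})$. The goal is to move all $e_0$-factors to the front and arrange the remaining $e_{\pm1}$-factors into the prescribed alternating, index-sorted pattern. Using identity (5), $e_0(x)e_\alpha(y)+e_\alpha(y)e_0(x)=0$, I can swap any adjacent pair $e_0$, $e_{\pm1}$ at the cost of a sign, hence push every $e_0$-letter leftward past the $e_{\pm1}$-letters; identity (3), $[e_0(x_1),e_0(x_2)]=0$, then lets me sort the block of $e_0$-letters by index (and they are allowed to repeat, giving the $e_0(x)^{\underline n}$ part). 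Now the monomial has the shape $e_0(x)^{\underline n}\cdot v$, where $v$ is a word in $e_{\pm1}$-letters only. Within $v$, identity (2), $e_\alpha(x_1)e_\alpha(x_2)=0$, kills $v$ unless the signs strictly alternate between $1$ and $-1$; so $v=e_{\alpha}(x_{i_1})e_{-\alpha}(x_{j_1})e_{\alpha}(x_{i_2})e_{-\alpha}(x_{j_2})\cdots$ for some fixed starting sign $\alpha=\pm1$, possibly ending with an extra $e_\alpha$ (the "hatted" case). Finally, identity (4), $e_\alpha(x_1)e_{-\alpha}(x_2)e_\alpha(x_3)=e_\alpha(x_3)e_{-\alpha}(x_2)e_\alpha(x_1)$, is precisely what is needed to sort the $i$-indices among themselves and the $j$-indices among themselves: it exchanges the two outer $e_\alpha$-letters of any length-3 alternating segment while fixing the middle $e_{-\alpha}$-letter, and such transpositions generate all permutations of the $e_\alpha$-positions; a symmetric application (reading a length-3 segment $e_{-\alpha}e_\alpha e_{-\alpha}$) sorts the $e_{-\alpha}$-positions. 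After these moves $m$ is congruent modulo $I$ to an element of $B$ (or to $0$), which proves the claim.

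The main obstacle I anticipate is the bookkeeping in the last step: verifying that repeated use of identity (4) genuinely realizes an arbitrary permutation of the $i_\ell$ (resp. $j_\ell$) without disturbing the already-normalized $e_0$-prefix or the alternation pattern, and handling carefully the boundary/parity cases — whether $v$ has even or odd length, whether the leading letter is $e_1$ or $e_{-1}$, and the degenerate cases where $v$ is empty or a single letter. One must check that a length-3 window $e_\alpha(x_a)e_{-\alpha}(x_b)e_\alpha(x_c)$ anywhere inside $v$ can be flipped even when it is flanked by further letters on either side; this is fine because identity (4), being a consequence in the $G$-ideal $I$ and $I$ being closed under left and right multiplication, can be applied to any infix. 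A secondary, more routine point is to make sure the initial rewriting via (1) is legitimate: one needs that the relation $x^{g^j}-\bigl(e_0(x)+\omega^j e_1(x)+\omega^{-j}e_{-1}(x)\bigr)$ indeed lies in $I$, which follows because this $G$-polynomial evaluates to zero on $sl_2(\mathbb{C})$ (as the idempotents $e_i$, $i\neq 0,\pm1$, annihilate $sl_2(\mathbb{C})$) and identity (1) together with the definitions of the $e_i$ already encodes this; alternatively one may simply add these evident weak $G$-identities to the generating set, as they are subsumed once (1) and the vanishing of the other $e_i$ are available. With these points settled, every $G$-monomial reduces modulo $I$ into the span of $B$, and by linearity so does every element of $\mathbb{C}\langle X;G\rangle$.
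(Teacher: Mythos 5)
Your proposal is correct and follows essentially the same route as the paper: rewrite each letter via $x^{g^j}=e_0(x)+\omega^je_1(x)+\omega^{-j}e_{-1}(x)$, push the $e_0$-factors left and sort them with identities (5) and (3), kill non-alternating words with (2), and sort the $e_{\pm1}$-indices with (4). The extra care you take about why the $g^j$-relation lies in $I$ and why (4) realizes arbitrary permutations of the $e_\alpha$-positions is sound and only makes explicit what the paper leaves implicit.
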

 \begin{proof}
 First note that the identity $x=e_0(x)+e_{1}(x)+e_{-1}(x)$, or more generally $g^{j}x=e_0(x)+ \omega { ^j}e_1(x)+\omega^{-j}e_{-1}(x)$, shows us that the monomial
\[
  m=m(x_1^{g_1},\ldots,x_n^{g_n})
\]
can be written as a linear combination of monomials in $e_0(x_i)$, $e_1(x_i)$ and $e_{-1}(x_i)$ modulo $I$. Hence we can suppose that $m$ is a monomial in the variables  $e_0(x_i)$, $e_1(x_i)$ and $e_{-1}(x_i)$. From identity 5 of Lemma \ref{Lema6}, we can rewrite each of these monomials leaving each $e_0(x_i)$ on the left side of the remaining elements. Furthermore, from identity 3 of the same lemma we can write $m= e_0(x)^{\underline{n}}e_{\alpha_1}(x_{r_1})\cdots e_{\alpha_s}(x_{r_s}) $ modulo $I$, with $\alpha_i=\pm1$. Now, we can assume that elements of the form $e_{\alpha_p}(x_i)$ and $e_{\alpha_p}(x_j)$ are not adjacent (due to identity 2). Finally, identity 4 allows us to rewrite $m$ as:
\[
  m=e_0(x)^{\underline{n}}e_{\alpha}(x_{i_1})e_{-\alpha}(x_{j_1})\cdots(x_{i_k})\widehat{e_{-\alpha}(x_{j_k})}
\]
modulo $I$, where $i_1\leq i_2\leq \cdots\leq i_k$ and $j_1\leq j_2\leq \cdots\leq j_k$.
 \end{proof}

\begin{lemma}\label{Lema8}
 No non-trivial linear combination of elements of $B$ is a $G$-identity for the pair $(M_2( \mathbb{C}),sl_2( \mathbb{C}))$.
\end{lemma}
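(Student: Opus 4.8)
The plan is to evaluate the monomials of $B$ on generic traceless matrices and to show that the resulting elements of $M_2(A)$, where $A=\mathbb{C}[a_i,b_i,c_i\mid i\ge1]$, are linearly independent over $\mathbb{C}$. As recalled in the Preliminaries, the pair $(R,S)$ sitting inside $M_2(A)$ and generated by the generic matrices $s_i$ is relatively free, so a $G$-polynomial $f$ is a weak $G$-identity for $(M_2(\mathbb{C}),sl_2(\mathbb{C}))$ exactly when its image $\bar f$ under the $G$-equivariant substitution $x_i\mapsto s_i$ vanishes in $M_2(A)$. Hence it suffices to prove: if $\sum_m c_m\bar m=0$ in $M_2(A)$ with $c_m\in\mathbb{C}$ almost all zero, then every $c_m=0$.

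\textbf{Computing the evaluations.} First I would compute $\bar m$ for $m\in B$. Writing $h=e_{11}-e_{22}$, the idempotents act on a generic matrix by $e_0(s_i)=a_ih$, $e_1(s_i)=c_ie_{21}$, $e_{-1}(s_i)=b_ie_{12}$. Using $h^2=I$, $e_{21}e_{12}=e_{22}$, $e_{12}e_{21}=e_{11}$, $e_{22}e_{21}=e_{21}$, $e_{11}e_{12}=e_{12}$ (together with the obvious vanishings), an easy induction on the number $k$ of blocks $e_\alpha(x_{i_l})e_{-\alpha}(x_{j_l})$ yields
\[
\bar m=\varepsilon_m\,\Big(\prod_i a_i^{n_i}\Big)\,u_m\cdot e_{pq},
\]
where $\varepsilon_m\in\{1,-1\}$ (coming from $h^{n_1+\cdots+n_r}$), $u_m$ is a monomial in the $b_i$ and $c_i$, and the matrix unit is: $e_{22}$ if $\alpha=1$ with the final factor present, $e_{21}$ if $\alpha=1$ with it missing, $e_{11}$ if $\alpha=-1$ with the final factor present, $e_{12}$ if $\alpha=-1$ with it missing. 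For $k=0$ one gets instead $\bar m=(\prod_i a_i^{n_i})(e_{11}+(-1)^{n_1+\cdots+n_r}e_{22})$. Moreover, in the case $\alpha=1$ with a full chain of $k$ blocks one has $u_m=c_{i_1}\cdots c_{i_k}b_{j_1}\cdots b_{j_k}$, and analogously in the other cases, so the commutative monomial $(\prod_i a_i^{n_i})u_m$ of $A$ records the exponent vector $(n_1,\ldots,n_r)$ as the multiplicities of the $a_i$, and the multisets $\{i_1,\ldots,i_k\}$ and $\{j_1,\ldots,j_k\}$ as the multiplicities of the $c_i$ and $b_i$ (or $b_i$ and $c_i$).

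\textbf{Concluding.} Since the monomials of $B$ list $i_1\le\cdots\le i_k$ and $j_1\le\cdots\le j_k$, these multisets determine $m$; and the $k=0$ monomials involve only the $a_i$, so they cannot coincide with the $\alpha=\pm1$ monomials having the final factor present, which have degree $\ge2$ in the $b_i,c_i$. Hence, up to sign, distinct $m\in B$ produce pairwise distinct nonzero monomials of the polynomial algebra $A$, each placed in a matrix position determined by $\alpha$ and by whether the final factor is present. Now set $\sum_m c_m\bar m=0$ and read off the four entries of the matrix separately: in each entry this becomes a vanishing $\mathbb{C}$-linear combination of pairwise distinct monomials of $A$, so all the coefficients $c_m$ occurring there are $0$; running over the four entries exhausts $B$, and the lemma follows. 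The computation is elementary and I do not expect any serious obstacle; the only point requiring care is the bookkeeping in the previous step — keeping track of the signs coming from $h^{n_1+\cdots+n_r}$ and making sure the diagonal contributions of the $k=0$ monomials do not collide with those of the full-block monomials in the $(1,1)$ and $(2,2)$ entries, which is precisely why the degree count in the $b_i,c_i$ is recorded.
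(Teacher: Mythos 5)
Your proposal is correct and follows essentially the same route as the paper: evaluate the monomials of $B$ on the generic traceless matrices, observe that each one lands (up to sign) as a single commutative monomial in the $a_i,b_i,c_i$ times a matrix unit determined by its type, and deduce linear independence from the algebraic independence of the generic entries together with the sorted-index normal form. The only cosmetic difference is that you separate the pure $e_0(x)^{\underline{n}}$ monomials from the others by a degree count in the $b_i,c_i$ within the diagonal entries, whereas the paper first kills their coefficients by a preliminary evaluation on diagonal generic matrices; both devices work.
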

 \begin{proof}
 If we evaluate an element of the set $B$ on the generic matrices 
\[
X_i=\matriz{a_i}{0}{0}{-a_i}=e_0\matriz{a_i}{b_i}{c_i}{-a_i}\in S^G
\]
and the said element contains a factor of the form $e_{1}(x_i)$ or $e_{-1}(x_i)$, then the $G$-monomial vanishes. Therefore, for each linear combination of such monomials, the only factors that remain after the evaluation on the generic matrices $X_i$ are those of the form $e_0(x)^{\underline{n}}=e_0(x_1)^{n_1}\cdots e_0(x_k)^{n_k}$ and the evaluation results in  $a_1^{n_1}\cdots a_k^{n_k}e_{11}+(-1)^{n_1+\cdots+ n_k}a_1^{n_1}\cdots a_k^{n_k}e_{22}$. Observing that the $a_i$ are algebraically independent, we can recover each $e_0(x)^{\underline{n}}$ from its evaluation. Therefore a trivial linear combination of monomials of $B$ occurs only if all coefficients of the factors $e_0(x)^{\underline{n}}$ are equal to zero. Thus, we can assume that in these linear combinations there is no element of the form $e_0(x)$. Now we consider the following monomials of the types $(1)$, $(2)$, $(3)$ and $(4)$:
  \begin{enumerate}
   \item $m_1= e_{1}(x_{i_1})e_{-1}(x_{j_1})\cdots e_{1}(x_{i_k})e_{-1}(x_{j_k})$,
   \item $m_2= e_{1}(x_{i_1})e_{-1}(x_{j_1})\cdots e_{1}(x_{i_k})$,
   \item $m_3= e_{-1}(x_{i_1})e_{1}(x_{j_1})\cdots e_{-1}(x_{i_k})e_{1}(x_{j_k})$,
   
   \item $m_4= e_{-1}(x_{i_1})e_{1}(x_{j_1})\cdots e_{-1}(x_{i_k})$.
   \end{enumerate}
Let, for each $i\geq 1$,
\[
  X_i=\matriz{a_i}{b_i}{c_i}{-a_i}.
\]
Evaluating each of the monomials above on $X_{i_1},X_{j_1},\ldots,X_{j_k}$, we  obtain, respectively, the matrices: 
  \begin{enumerate}
   \item $M_1=c_{i_1}\cdots c_{i_k}b_{j_1}\cdots b_{j_k}e_{22}$,
   \item $M_2=c_{i_1}\cdots c_{i_k}b_{j_1}\cdots b_{j_{k-1}}e_{21}$,
   \item $M_3=b_{i_1}\cdots b_{i_k}c_{j_1}\cdots c_{j_k}e_{11}$,
   \item $M_4=b_{i_1}\cdots b_{i_k}c_{j_1}\cdots c_{j_{k-1}}e_{12}$. 
  \end{enumerate}
Hence it is sufficient to consider linear combinations where the monomials are of the same type $(i)$, $i\in\{1,2,3,4\}$. But it is obvious that the monomials that appear in linear combinations of elements of the same type $m_r$ are pairwise distinct. Thus $m_r=m'_r$ if and only if $ m_r(X_{i_1},X_{j_1},\ldots,\widehat{X_{j_k}})=m'_r(X_{i_1},X_{j_1},\ldots,\widehat{X_{j_k}})$. In this way, if $f=\sum \alpha_{ij} m^j_i$, where $\alpha_{ij}\in  \mathbb{C}$ and $m_i^j\in B$ are of the type $(i)$, annihilates on the pair $(M_2( \mathbb{C}), sl_2( \mathbb{C}))$, then $\sum \alpha_{i1}M^j_1=\sum \alpha_{i2}M^j_2=\sum \alpha_{i3}M^j_3=\sum \alpha_{i4}M^j_4=0$, where $M^j_i$ are the evaluations of $m^j_i$ on the generic matrices $X_i$. Since the variables are algebraically independent, it follows that $\alpha_{ij}=0$ for every $i$, $j$.
 \end{proof}

\begin{theorem}
 Let $G= \mathbb{Z}_n$, $n\geq 3$. The identities of Lemma \ref{Lema6} form a basis of the $G$-identities for the pair $(M_2( \mathbb{C}),sl_2( \mathbb{C}))$.
\end{theorem}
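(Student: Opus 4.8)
The plan is to prove that the $G$-ideal $I$ of weak $G$-identities generated by the relations of Lemma~\ref{Lema6} coincides with the $G$-ideal $J$ of \emph{all} weak $G$-identities of the pair $(M_2(\mathbb{C}),sl_2(\mathbb{C}))$; this equality is precisely the statement that the relations of Lemma~\ref{Lema6} form a basis. One inclusion is immediate: Lemma~\ref{Lema6} asserts exactly that each of its five relations is a weak $G$-identity for the pair, so $I \subseteq J$.

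For the reverse inclusion I would argue inside the relatively free object $\mathbb{C}\langle X;G\rangle/I$. Let $f \in J$. By Lemma~\ref{Lema7} the coset $f+I$ is a finite linear combination $\sum_i \alpha_i(m_i+I)$ with $\alpha_i \in \mathbb{C}$ and $m_i \in B$, so $f - \sum_i \alpha_i m_i \in I \subseteq J$; since $f$ itself belongs to $J$, the combination $\sum_i \alpha_i m_i$ lies in $J$, that is, it is a weak $G$-identity for the pair. By Lemma~\ref{Lema8} no non-trivial linear combination of elements of $B$ is a weak $G$-identity, so every $\alpha_i = 0$ and hence $f \in I$. Equivalently: the canonical epimorphism $\mathbb{C}\langle X;G\rangle/I \to \mathbb{C}\langle X;G\rangle/J$ sends the spanning set $\{m+I : m \in B\}$ (Lemma~\ref{Lema7}) onto the linearly independent set $\{m+J : m \in B\}$ (Lemma~\ref{Lema8}); a surjection carrying a spanning set to a linearly independent set is injective, so its kernel $J/I$ vanishes and $I = J$.

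In this way the theorem becomes a short formal consequence of the three preceding lemmas, and the genuine work is already contained in Lemmas~\ref{Lema7} and~\ref{Lema8}. I expect the principal obstacle to be the normal-form reduction behind Lemma~\ref{Lema7}: after decomposing each variable into its $e_0$, $e_1$, $e_{-1}$ parts via relation~(1), one must check that moving every $e_0$-factor to the left (relation~(5)), commuting those $e_0$-factors among themselves (relation~(3)), annihilating adjacent factors of equal sign (relation~(2)), and applying the single ``outer'' transposition permitted by relation~(4) together suffice to sort the two index sequences $i_1\le\cdots\le i_k$ and $j_1\le\cdots\le j_k$ independently and so arrive at a monomial of $B$. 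The companion point in Lemma~\ref{Lema8} is computational rather than conceptual: one evaluates the monomials of $B$ on the generic traceless matrices $s_i$ and reads off, from the resulting monomials in the algebraically independent entries $a_i$, $b_i$, $c_i$, that distinct elements of $B$ remain linearly independent modulo $J$.
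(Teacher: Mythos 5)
Your argument is exactly the paper's: the theorem is deduced as an immediate consequence of Lemmas~\ref{Lema7} and~\ref{Lema8}, with Lemma~\ref{Lema6} giving the inclusion $I\subseteq J$ and the spanning-set-onto-linearly-independent-set observation forcing $I=J$. The proposal is correct and matches the paper's proof.
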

\begin{proof}
It is an immediate consequence of Lemmas \ref{Lema7} and \ref{Lema8}.
\end{proof}

\section{The dihedral groups}

Throughout this section we suppose that $G=D_n=\langle g,h\mid g^n=h^2=1, hgh=g^{-1}\rangle$, $n\geq 3$. In analogy with what was done in the case of the cyclic groups, we can assume that $g$ acts as before. Since $hgh=g^{-1}$, $h$ acts by conjugation via the matrix $h=e_{12}+e_{21}$, see \cite[Lemma 9]{Berele} or \cite[section 3]{KM}:
\[
g\matriz{a}{b}{c}{-a}=\matriz{a}{\omega^{-1}b}{\omega c}{-a},\qquad h\matriz{a}{b}{c}{-a}=\matriz{-a}{c}{b}{a}.
\]
It is well known (see, for example, \cite{Serre1977}), that in this case the group has only irreducible representations of degree 1 and 2. If $n=2m$ and $e_i$ are as before, then the group algebra $  \mathbb{C}G$ decomposes into a direct sum of copies of $ \mathbb{C}$ and of $M_2( \mathbb{C})$ as follows:
\begin{itemize}
 \item four copies of $ \mathbb{C}$, each generated by:
    \begin{itemize}
     \item [(a)] $(1/2)(1+h)e_0$,
     \item [(b)]$(1/2)(1-h)e_0$,
     \item [(c)]$(1/2)(1-h)(1+e_0)$,
     \item [(d)]$(1/2)(1+h)(1-e_0)$,
    \end{itemize}
\item $m-1$ copies of $M_2( \mathbb{C})$ generated by: $e_{\pm i}$, $he_{\pm i}$, $i\geq 1$.
    
\end{itemize}
On the other hand, if $n=2m+1$, then $ \mathbb{C}G$ decomposes as:
    \begin{itemize}
     \item two copies of $ \mathbb{C}$ generated by:
        \begin{itemize}
         \item [(a)] $(1/2)(1+h)e_0$,
         \item [(b)] $(1/2)(1-h)e_0$,
        \end{itemize}
    \item $m$ copies of $M_2( \mathbb{C})$ generated by:  $e_{\pm i}$, $he_{\pm i}$, $i\geq 1$.

    \end{itemize}
A direct computation shows that $e_{i}$ (and therefore $he_{i}$) annihilate $sl_2( \mathbb{C})$ if $i \neq 0$, $1$, $n-1$. As in the case of the cyclic group, we will write $e_{-1}$ instead of $e_{n-1}$.

\begin{lemma}\label{Lema9}
 Let $G=D_n$, $n\geq 3$ and $\alpha = \pm 1$. The following relations are $G$-identities for the pair $(M_2( \mathbb{C}),sl_2( \mathbb{C}))$:
 \begin{enumerate}
  \item $(1+h)e_0(x)=0$,
  \item $e_0(x_1)f(x_2)+f(x_2)e_0(x_1)=0$, where $f\in \{e_\alpha,he_\alpha\}$,
  \item $e_0(x)+e_1(x)+e_{-1}(x)=x$,
  \item $[e_0(x_1),e_0(x_2)]=0$,
  \item $f(x_1)f(x_2)=0$, where $\,f\in \{e_{\alpha},he_{\alpha}\}$,
  \item $f(x_1)g(x_2)=0$, if $(f,g)\in \{(e_{\alpha}, he_{-\alpha}),(he_{\alpha},e_{-\alpha})\}$,
  \item $f(x_1)x_2g(x_3)=f(x_3)x_2g(x_1)$, where $f$, $g\in \{e_\alpha,he_\alpha\}$,
  \item $e_\alpha(x_1)x_2 he_{-\alpha}(x_3)=he_{-\alpha}(x_3)x_2e_\alpha(x_1)$,
  \item $e_\alpha(x_1)e_{-\alpha}(x_2)=he_{-\alpha}(x_2)he_{\alpha}(x_1)$,
  \item $e_\alpha(x_1)he_\alpha(x_2)=e_\alpha(x_2)he_\alpha(x_1)$. 
 \end{enumerate}
\end{lemma}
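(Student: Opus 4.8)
The plan is to verify all ten relations directly, by evaluating the corresponding $G$-polynomials on the generic traceless matrices $s_i=\matriz{a_i}{b_i}{c_i}{-a_i}$. Since $\mathbb{C}$ is infinite, a $G$-polynomial is a weak $G$-identity for $(M_2(\mathbb{C}),sl_2(\mathbb{C}))$ exactly when its value at these generic matrices is the zero matrix in $M_2(A)$ (each entry being a polynomial in the $a_i,b_i,c_i$, it vanishes under all substitutions iff it vanishes identically). So everything reduces to computing the images of the operators $e_0$, $e_{\pm1}$, $he_0$, $he_{\pm1}$ on a generic $s=\matriz{a}{b}{c}{-a}$ and substituting.

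First I would record those images. From the action of $g$ one gets, exactly as in the cyclic case, $e_0s=\matriz{a}{0}{0}{-a}$, $e_1s=c\,e_{21}$, $e_{-1}s=b\,e_{12}$; and since $h\matriz{a}{b}{c}{-a}=\matriz{-a}{c}{b}{a}$, applying $h$ afterwards gives $he_0s=-e_0s$, $he_1s=c\,e_{12}$, $he_{-1}s=b\,e_{21}$. Thus for $\alpha=1$ the operators $e_\alpha$ and $he_{-\alpha}$ take their values in $\mathbb{C}e_{21}$ while $he_\alpha$ and $e_{-\alpha}$ take their values in $\mathbb{C}e_{12}$; moreover both $e_\alpha$ and $he_\alpha$ read off the \emph{same} entry of their argument (the $c$-entry when $\alpha=1$, the $b$-entry when $\alpha=-1$). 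The case $\alpha=-1$ is the same computation with the roles of $e_{12}$ and $e_{21}$ interchanged, so it needs no separate treatment.

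With these formulas in hand the relations fall into three easy groups. Relations (1), (3), (4) concern only the $e_0$-part: $(1+h)e_0s=e_0s+he_0s=0$, $e_0s+e_1s+e_{-1}s=s$, and $e_0(s_1),e_0(s_2)$ are diagonal hence commute. Relations (2), (5), (6) are ``vanishing-product'' relations: the anticommutator of the diagonal matrix $e_0(s_1)$ with the strictly off-diagonal $f(s_2)$ is $0$, and whenever the prescribed $f,g$ force $f(s_1)$ and $g(s_2)$ into the same off-diagonal matrix unit their product vanishes because $e_{12}^2=e_{21}^2=0$. Relations (7)--(10) are ``reversal'' relations; here the point is that a product $f(s_1)\,s_2\,g(s_3)$ lies in a one-dimensional space spanned by a single matrix unit whose indices depend only on $f$ and $g$ (not on which argument went where, since, using $e_{ab}s_2e_{cd}=(s_2)_{bc}e_{ad}$, the outer factor $f$ supplies the row and $g$ the column), while the scalar coefficient is a monomial in which the contributions of $s_1$ and $s_3$ are scalars and hence commute and enter symmetrically; so interchanging $x_1\leftrightarrow x_3$ leaves the value unchanged. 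Relation (8) is the same bookkeeping with $e_\alpha(x_1)$ and $he_{-\alpha}(x_3)$ swapped, and (9), (10) follow by checking that both sides are the same scalar multiple of the same matrix unit (for $\alpha=1$, both sides of (9) equal $c_1b_2\,e_{22}$ and both sides of (10) equal $c_1c_2\,e_{22}$).

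There is no genuine obstacle: the proof is a short, elementary computation. The only points needing care are interpreting $he_i$ as the product in $\mathbb{C}G$ acting on the module $sl_2(\mathbb{C})$, so that $he_i(s)=h\bigl(e_i(s)\bigr)$; keeping track of the several sub-cases of (6), (7), (8) in which $f$ and $g$ range independently over $\{e_\alpha,he_\alpha\}$; and remembering that the value of a weak identity is computed in $M_2(\mathbb{C})$ and need not be traceless, so the evaluations landing in $\mathbb{C}e_{11}$ or $\mathbb{C}e_{22}$ cause no problem.
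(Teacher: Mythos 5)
Your proposal is correct and takes the same route as the paper, which simply states that the lemma follows by direct evaluation; you have supplied the explicit images $e_0s=a(e_{11}-e_{22})$, $e_1s=ce_{21}$, $e_{-1}s=be_{12}$, $he_1s=ce_{12}$, $he_{-1}s=be_{21}$ on generic traceless matrices and the reduction to generic matrices over the polynomial ring, and all ten verifications (including the key observations that $e_\alpha$ and $he_\alpha$ extract the same entry of their argument and that $e_{ab}s e_{cd}=s_{bc}e_{ad}$) check out.
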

\begin{proof}
 The proof consists of direct (and simple but tedious) evaluation.
\end{proof}

We will see that the above ten identities form a basis of the $G$-identities for the pair $(M_2( \mathbb{C}),sl_2( \mathbb{C}))$, when $G$ is the dihedral group. Note that this is not a minimal basis. For example, identities (9) and (10) are equivalent; (7) and (8) are too, as well as (5) and (6). We decided to keep the identities 6, 8 and 10 so that the arguments of Lemma \ref{Lema10} are more transparent.

Let $I$ be the $G$-ideal of weak identities generated by the identities of Lemma \ref{Lema9} and let $B$ be the set formed by all monomials of the type $e_0(x)^{\underline{n }}\widehat{u}$, where either $u=1$ or $u$ is a monomial in $\{e_{\pm 1},he_{\pm 1}\}$ such that:
    \begin{itemize}
     \item [($\mathcal{B}_1$)] $he_1$ and $he_{-1}$ do not occur simultaneously,
     \item [($\mathcal{B}_2$)] Each $he_\alpha$ appears only after the $e_{-\alpha}$,
     \item [($\mathcal{B}_3$)] Each $e_\alpha$ (respectively $he_\alpha$) can be immediately followed only by $e_{-\alpha}$ and $he_\alpha$ (respectively by $e_\alpha$),
     \item [($\mathcal{B}_4$)] If $i\leq j$ and $f\in\{e_\alpha, he_\alpha\}$, then $f(x_i)$ comes before  $f(x_j)$,
     \item [($\mathcal{B}_5$)] If $i<j$, then $u$ does not contain factors of the type $f(x_j)g(x_i)$ where $f$, $g\in\{e_\alpha,he_\alpha\}$.
    \end{itemize}
Given the conditions $(\mathcal{B}_1)$--$(\mathcal{B}_5)$, the only possibilities for the monomials $u$ are $u=1$ and $u=u_1u_2$, where $u_1$ and/or $u_2$ can be equal to 1, or come from the list below:  
\begin{itemize}
 \item $u_1=e_{\alpha}(x_{i_1})e_{-\alpha}(x_{j_1}) \cdots e_{\alpha}(x_{i_k})\widehat{e_{-\alpha}(x_{j_k})}$,
 \item $u_2=he_{\beta}(x_{l_1})e_{\beta}(x_{l_2}) \cdots he_{\beta}(x_{l_{r-1}})\widehat{e_{\beta}(x_{l_r})}$.
\end{itemize}
Here $\beta=\alpha$ and $l_s=i_{k+s}$ if $u_1$ ends in $e_\alpha(x_{i_k})$, on the other hand $\beta=-\alpha$ and $l_s=j_{k+s}$ if $u_1$ ends in $e_{-\alpha}(x_{j_k})$. Furthermore, in every case we have $i_1\leq i_2\leq \cdots$,  $j_1\leq j_2\leq \cdots$. 

\begin{lemma}\label{Lema10}
 Let the set $B$ be as above. Then $\mathbb{C}\langle X;G\rangle/I$ is spanned by the set $\{m+I\mid m\in B\}$.
\end{lemma}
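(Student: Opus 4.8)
The plan is to mimic the reduction carried out in the cyclic case (Lemma~\ref{Lema7}), but now keeping careful track of the extra generators $he_{\pm1}$ that arise because $D_n$ is non-abelian. First I would observe that any $G$-monomial $m=m(x_1^{g_1},\ldots,x_n^{g_n})$ can be rewritten modulo $I$ as a linear combination of monomials in the variables $e_0(x_i)$, $e_1(x_i)$, $e_{-1}(x_i)$, $he_1(x_i)$, $he_{-1}(x_i)$: indeed, writing each occurrence of $x^{g}$ via the primitive idempotents of $\mathbb{C}G$ and using identity~(3) (together with the fact, recorded just before the lemma, that $e_i$ and $he_i$ annihilate $sl_2(\mathbb{C})$ for $i\neq 0,1,-1$) expresses $x^g$ as a linear combination of $e_0(x),e_1(x),e_{-1}(x),he_0(x),he_1(x),he_{-1}(x)$, and identity~(1) kills the term $he_0(x)$. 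So it suffices to show every monomial $w$ in $\{e_0,e_{\pm1},he_{\pm1}\}$ reduces modulo $I$ to a linear combination of elements of $B$.

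Next I would push all the $e_0$-letters to the left. Identity~(2) lets one move each $e_0(x_i)$ past any letter of the form $e_\alpha(x_j)$ or $he_\alpha(x_j)$ at the cost of a sign, so $w\equiv\pm\,e_0(x_{p_1})\cdots e_0(x_{p_t})\,u \pmod I$ where $u$ is a monomial purely in $\{e_{\pm1},he_{\pm1}\}$; then identity~(4) reorders the $e_0$-block into the required form $e_0(x)^{\underline n}$, and identity~(2) applied again collapses equal adjacent $e_0(x_i)$'s into powers. From this point on only the tail $u$ needs attention, and the work is exactly the combinatorics encoded in conditions $(\mathcal B_1)$--$(\mathcal B_5)$. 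I would handle these one at a time, in roughly the order: use identities~(5) and~(6) to guarantee that no two "like'' letters (two $e_\alpha$'s, two $he_\alpha$'s, or the forbidden mixed pairs $e_\alpha\,he_{-\alpha}$, $he_\alpha\,e_{-\alpha}$) sit adjacent — this forces the alternation pattern of $(\mathcal B_3)$ and, in particular, shows that $u$ splits as $u_1u_2$ with $u_1$ an alternating word in $e_{\alpha},e_{-\alpha}$ and $u_2$ an alternating word in $he_\beta,e_\beta$; identity~(9) (equivalently~(10)) lets one convert a trailing block of $he$-letters so that $he_1$ and $he_{-1}$ never both appear, giving $(\mathcal B_1)$, and to push all $he_\alpha$ past $e_{-\alpha}$ so the $he$'s form the suffix $u_2$, giving $(\mathcal B_2)$; finally identities~(7), (8) and~(10), which are "swap the two outer arguments of a length-$3$ (or length-$2$) product'' relations, are sorting moves — a bubble-sort argument shows they let one reorder the arguments within each of the two interleaved chains $i_1\le i_2\le\cdots$ and $j_1\le j_2\le\cdots$, yielding $(\mathcal B_4)$ and $(\mathcal B_5)$. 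At each stage one must check that the move just applied does not destroy the normal-form features already achieved; this bookkeeping is the only delicate part.

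The main obstacle I anticipate is precisely this interaction between the reduction steps: after one uses identity~(9)/(10) to move the $he$-letters to the right, applying a sorting identity such as~(7) or~(8) may reintroduce an adjacency forbidden by $(\mathcal B_3)$, or a single swap of arguments deep inside the word may not be directly available (identities~(7),(8) move the \emph{outer} two factors of a product, so to swap two interior letters one must peel off the surrounding letters, which is only legitimate because of the alternation structure). The way around this is to argue by a well-chosen induction, e.g.\ on the length of $u$ together with a secondary complexity measure (number of inversions relative to the target order, plus number of $he$-letters not yet in the suffix, plus number of forbidden adjacencies), and to show each identity strictly decreases this measure while not increasing the ones already handled — essentially the same "descent'' bookkeeping used in Lemma~\ref{Lema7}, only with more cases. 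Since the identities were deliberately chosen (the authors note they kept the redundant~(6),(8),(10) "so that the arguments of Lemma~\ref{Lema10} are more transparent'') so that each of $(\mathcal B_1)$--$(\mathcal B_5)$ is enforced by a dedicated relation, I expect the descent to go through cleanly once the complexity measure is set up correctly, and the lemma follows because any monomial is driven by these moves into $B$ modulo $I$, whence $\{m+I\mid m\in B\}$ spans $\mathbb{C}\langle X;G\rangle/I$.
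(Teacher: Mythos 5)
Your proposal follows essentially the same route as the paper's proof: rewrite every $G$-monomial in the letters $e_0,e_{\pm1},he_{\pm1}$ via the idempotents and identity (1), push the $e_0$'s to one side with (2) and order them with (4), use (5)--(6) for the adjacency constraints, (8)--(9) to eliminate coexisting $he_1,he_{-1}$ and to move the $he$-letters into the suffix, and (7), (10) as argument-sorting moves — which is exactly the sequence of reductions the paper performs (it merely handles the $e_0$-block last rather than first). The ``descent'' bookkeeping you defer is carried out in the paper by the same informal case analysis you outline, so your plan is sound and matches the intended argument.
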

 \begin{proof}
In analogy with the argument from the case of the cyclic groups, we have $x^{g^j} = e_0(x)+\omega^je_1(x)+\omega^{-j}e_{-1}(x)$. In addition, $e_0(x)=-he_0(x)$ modulo $I$, which gives us: 
\[
x^h =  he_0(x)+he_1(x)+he_{-1}(x)  = -e_0(x)+he_1(x)+he_{-1}(x).
\]
In this way, each $G$-monomial can be written as a linear combination of monomials in the $e_0(x_j)$, $e_\alpha(x_j)$ and $he_\alpha(x_j)$, modulo $I$. 

Let $m$ be a monomial in $e_0(x_j)$, $e_\alpha(x_j)$ and $he_\alpha(x_j)$.
Let us suppose that $m=m'he_\alpha(x_1)Yhe_{-\alpha}(x_2)m''\notin I$, where neither $he_1$ nor $he_{-1}$ appear in $Y$. If $Y=1$, by using identity (9) from Lemma \ref{Lema9}, we can write $m$ in the form $m= m'e_{-\alpha}(x_2)e_\alpha(x_1) m''$. Otherwise, according to identities (5) and (6), $Y$ must have the form $Y=e_\alpha(y_1)e_{-\alpha}(y_2)\cdots e_\alpha(y_{k-1}) e_{-\alpha}(y_k)$.

By using the identities (8) and (9) successively, we obtain
    \begin{align*}
	m & = m'he_\alpha(x_1)(e_\alpha(y_1)e_{-\alpha}(y_2)\cdots e_\alpha(y_{k-1})e_{-\alpha}(y_k))he_{-\alpha}(x_2)m'' \\
      & \stackrel{(8)}{=} m'he_\alpha(x_1)he_{-\alpha}(x_2)Y'm'' \\
      & \stackrel{(9)}{=} m'e_{-\alpha}(x_2)e_\alpha(x_1)Y'm'',
	\end{align*}
modulo $I$, where $Y'$ is a monomial in the $e_{\pm 1}$'s. Repeating this procedure several times we obtain $(\mathcal{B}_1)$. Now we suppose $m$ is the following monomial: 
\[
m=m_1'he_\alpha(x_1)Ye_{-\alpha}(x_2) m_2''
\]
with $he_{\alpha}$ as close as possible to $e_{ - \alpha}(x_2)$ (and $m$ satisfying $\mathcal{B}_1$), that is, we have in $m$ some $e_{-\alpha}$ after some variable $he_\alpha$, but in $Y$ there are no variables $h_{\alpha}$. Since $m\notin I$, from the identities (6) and (8) we obtain 
\[
Ye_{-\alpha}(x_2)=e_\alpha(y_1)e_{-\alpha}(y_2)\cdots e_{-\alpha}(y_{k-1})e_{\alpha}(y_k)e_{-\alpha}(x_2).
\] 
Therefore $m=m_1'Y'e_{-\alpha}(x_2)e_{\alpha}(y_{k})he_\alpha(x_1)m_2''$ 
modulo $I$. Here $Y'$ is a monomial in the $e_{\pm 1}$'s. This allows us to assume, without interfering with $\mathcal{B}_1$, that in $m$ each $he_\alpha$ occurs only after all $e_{-\alpha}$. Thus we obtain $(\mathcal{B}_1)$--$(\mathcal{B}_2)$. By identities (5) and (6) we already have:
\begin{align*}
	e_\alpha(x_1)e_\alpha(x_2) & = 0, \\
    e_\alpha(x_1)he_{-\alpha}(x_2) & =0, \\
    he_\alpha(x_1)he_\alpha(x_2) & = 0,\\
    he_\alpha(x_1)e_{-\alpha}(x_2)& =0,
\end{align*}
and, from what we saw above, we eliminate instances of the form $he_\alpha(x_1)he_{-\alpha}(x_2)$. Therefore, each monomial $m\notin I$ can be written modulo $I$ as a product of $e_0(x_i)$, $e_\alpha(x_i)$ and $he_\alpha(x_i)$, where each $e_\alpha$ is immediately followed only by $e_{-\alpha}$ or $he_{\alpha}$, while $he_\alpha$ is immediately followed by only $e_\alpha$. Furthermore, $he_{1}$ and $he_{-1}$ do not occur simultaneously. Therefore, the conditions $(\mathcal{B}_1)$--$(\mathcal{B}_3)$ are valid.
Let $f$, $g\in\{e_\alpha,he_\alpha\}$ and suppose that variables of the same type ($e_\alpha$ or $he_\alpha$) appear in $m$. Thus, $m$ has the form $m=m_2'f(x_i)m_2''f(x_j)m_3$. Choose $f(x_i)$ and $f(x_j)$ in such a way that in $m_2''$ there is no element of type $f$. If $f=e_\alpha$, then from the properties ($\mathcal{B}_1$)--($\mathcal{B}_3$) and the identity (6) it follows that $m_2''\in\{ e_{-\alpha}(y_1), he_{\alpha}(y_1)\}$. On the other hand, if $f=he_{\alpha}$, again from ($\mathcal{B}_1$)--($\mathcal{B}_3$) and from (6) we obtain $m_2''= e_{\alpha}(y_1)$. In any case, $m=m_2'f(x_j)m_2''f(x_i)m_3$ modulo $I$ (identity (7)), and this gives us $(\mathcal{B}_1)$--$( \mathcal{B}_4)$. If $m=m_3'f(x_j)g(x_i)m_3''$, the identity (10) allows us to write $m=m_3'f(x_i)g(x_j)m_3''$ modulo $I$, therefore the conditions $(\mathcal{B}_1)$--$(\mathcal{B}_5)$ are satisfied. Finally, all entries $e_0$ can be placed to the left in each monomial due to (2) and the variables $x_i$ of the elements of the form $e_0(x_i)$ can be placed in ascending order (identity (4)). In this way the lemma is proved.
\end{proof}

\begin{lemma}\label{Lema11}
No non-trivial linear combination of elements of $B$ is a $G$-identity for the pair $(M_2 ( \mathbb{C}), sl_2 ( \mathbb{C}))$.
\end{lemma}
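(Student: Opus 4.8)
The plan is to follow the pattern of Lemma~\ref{Lema8}, evaluating the monomials of $B$ on the generic traceless matrices $X_i=\matriz{a_i}{b_i}{c_i}{-a_i}$ and proving the resulting elements of $M_2(\mathbb{C}[a_i,b_i,c_i])$ are linearly independent; the new ingredient compared with the cyclic case is the bookkeeping forced by the generators $he_{\pm1}$. First I would record the evaluations: a direct computation (recall $h$ acts by conjugation by $e_{12}+e_{21}$) gives $e_0(X_i)=a_i(e_{11}-e_{22})$, $e_1(X_i)=c_ie_{21}$, $e_{-1}(X_i)=b_ie_{12}$, $he_1(X_i)=c_ie_{12}$, $he_{-1}(X_i)=b_ie_{21}$. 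Hence evaluating $m=e_0(x)^{\underline n}\widehat u\in B$ on generic matrices yields $\pm(\text{a monomial in the }a_i,b_i,c_i)\cdot e_{st}$: the prefix $e_0(x)^{\underline n}$ contributes the factor $\prod_i a_i^{n_i}$ together with at most a sign, and the product of matrix units coming from $u$ telescopes to a single $e_{st}$, which is never zero because $(\mathcal B_3)$ forces consecutive letters of $u$ to multiply to a nonzero unit; moreover $s=2$ when $\alpha=1$ and $s=1$ when $\alpha=-1$.

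The first step is to dispose of the pure-$e_0$ monomials (those with $u=1$): substituting the \emph{diagonal} generic matrices $X_i=a_i(e_{11}-e_{22})$ kills every monomial containing some $e_{\pm1}$ or $he_{\pm1}$, so the surviving evaluations $\prod_i a_i^{n_i}(e_{11}+(-1)^{\sum n_i}e_{22})$ are linearly independent by algebraic independence of the $a_i$; thus in a weak $G$-identity supported on $B$ the coefficients of the pure-$e_0$ monomials vanish, and we may assume $u\neq 1$ for all remaining monomials. For these the evaluation lies in one of the four lines $\mathbb{C}[a_i,b_i,c_i]\,e_{st}$, and since the $e_{st}$ are linearly independent over the polynomial ring, the identity splits into four pieces, one for each matrix unit; it therefore suffices to treat monomials whose evaluations all involve a fixed $e_{st}$.

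Within such a class I would prove that $m\mapsto\big(\prod_i a_i^{n_i}\big)\cdot(\text{the }b\text{- and }c\text{-monomial of }u)$ is injective, which by algebraic independence finishes the proof. To reconstruct $m$ from its evaluation: (a) the $a_i$-exponents give the prefix $e_0(x)^{\underline n}$, since by identity~(4) of Lemma~\ref{Lema9} its variables occur in increasing order; (b) the row index of $e_{st}$ gives $\alpha$; (c) the number of $c$-factors minus the number of $b$-factors determines which of $u_1$, $u_2$ is of full type, which is of hat type (or trivial), and their lengths --- a short case check over the finitely many shapes of $u$ listed after Lemma~\ref{Lema9} shows that the admissible shapes occupy pairwise disjoint ranges of this difference inside each matrix-unit class; (d) with the shape and lengths fixed, $(\mathcal B_4)$ and especially $(\mathcal B_5)$ determine $u_1$ and $u_2$ outright: the variables along each slot sequence are weakly increasing, and at the junction of $u_1$ and $u_2$ (where the two adjacent letters have the same sign, so $(\mathcal B_5)$ applies) the smallest of the available variables must be assigned to $u_1$; hence the multiset of $c$-variables, and that of $b$-variables, splits in a unique way between $u_1$ and $u_2$, and each of $u_1$, $u_2$ is the unique interleaving of the resulting increasing sequences compatible with its prescribed alternation.

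The main obstacle is steps~(c)--(d): one must enumerate the possible shapes of an element of $B$, verify that their generic evaluations fall into the claimed matrix units with the claimed gaps between the $c$- and $b$-degrees, and check that conditions $(\mathcal B_1)$--$(\mathcal B_5)$ leave no ambiguity once the evaluation is known. Everything else reduces to routine $2\times2$ matrix arithmetic.
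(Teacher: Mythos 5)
Your proposal follows essentially the same route as the paper's proof: evaluate on the generic traceless matrices, dispose of the pure $e_0$-monomials first, split the remaining identity according to the matrix unit $e_{st}$, distinguish the finitely many admissible shapes of $u$ by the degrees in the $b$- and $c$-variables, and recover each monomial from its evaluation using the ordering conditions $(\mathcal{B}_4)$--$(\mathcal{B}_5)$ and the algebraic independence of the $a_i,b_i,c_i$. One small correction to your step (c): the single number $\#c-\#b$ does not separate all shapes within a matrix-unit class (for $e_{12}$, the shape with $u_1=1$ and $u_2=he_1(x_{i_1})e_1(x_{i_2})\cdots he_1(x_{i_r})$ and the shape with $u_1=e_{-1}e_1\cdots e_{-1}e_1$ followed by the same kind of $he_1$-block both give $\#c-\#b=r>0$), but the pair $(\#b,\#c)$ does, since the former has $\#b=0$ and the latter $\#b\geq 1$ --- this is exactly how the paper's case analysis proceeds.
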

 \begin{proof}
  Consider the generic matrices $X_i=a_i(e_{11}-e_{22})+b_ie_{12}+c_ie_{21}$ and $f$ a linear combination of elements of $B$. We can write $\displaystyle f={\textstyle\sum_{\underline{n}}}e_0(x)^{\underline{n}}u_{\underline{n}}$, where the $e_0(x)^{\underline{n}}$ are pairwise distinct. Each $u_{\underline{n}}$ is of the form 
  \[
  u_{\underline{n}}=\sum_{i,j=1}^2\sum_k \alpha_{ij}^k(\underline{n})u(\underline{n})_{ij}^k,
  \]
  with $u(\underline{n})_{ij}^k\in B$, $\alpha(\underline{n})_{ij}^k\in  \mathbb{C}$, $u(\underline{n})_{ij}^k(X_1,\ldots, X_m)=g(\underline{n})_{ij}^ke_{ij}$ where $g(\underline{n})_{ij}^k\in  \mathbb{C}[b_i,c_i\mid i\geq 1]$. When we evaluate $e_0(x)^{\underline{n}}$ in the generic matrices $X_1$, \dots, $X_m$ we obtain a matrix of the form $m(\underline{n})(e_{11} \pm e_{ 22})$, with $m(\underline{n})=a_1^{n_1}\cdots a_k^{n_k}$. Therefore   
  \begin{equation*}
    \begin{split}
	\displaystyle f(X_1,\ldots, X_m) & =  \sum_{\underline{n}}m(\underline{n})(e_{11}\pm e_{22})(\sum_{i,j=1}^2\sum_k \alpha_{ij}^k(\underline{n})u(\underline{n})_{ij}^k(X_1,\ldots, X_m)) \\
        & = \displaystyle \sum_{\underline{n}}m(\underline{n})\matriz{\displaystyle\sum_k \alpha_{11}^k(\underline{n})g(\underline{n})^k_{11}}{\displaystyle\sum_k \alpha_{12}^k(\underline{n})g(\underline{n})^k_{12}}{\displaystyle\pm\sum_k \alpha_{21}^k(\underline{n})g(\underline{n})^k_{21}}{\displaystyle\pm\sum_k \alpha_{22}^k(\underline{n})g(\underline{n})^k_{22}}.
	\end{split}
\end{equation*}
Due to the order imposed on the $a_i$'s we can recover each $e_0(x)^{\underline{n}}$ from its evaluation in the generic matrices, that is,
\[
e_0(x)^{\underline{n}}=e_0(x)^{\underline{n'}} \mbox{\textrm{ if and only if }} m(\underline{n})=m(\underline{n'}).
\]
Clearly we must have, for each $i$ and $j$,
\[
\sum_k \alpha_{ij}^k(\underline{n})g(\underline{n})^k_{ij}=0,
 \]
whenever $f(X_1,\ldots, X_m)=0$. Thus, all we have to do is check whether the elements $u\in B$, such that when evaluated in the generic matrices, yield multiples of the same elementary matrix $e_{ij}$, are linearly independent. Write $u=u_1u_2$ and denote the evaluation of $u$ in the generic matrices $X_i$ by $\overline{u}$. Note that the only ways we can obtain a multiple of an elementary matrix $e_{ij}$ are as follows:

\begin{flushleft}
$(e_{11})$:
\end{flushleft}
\begin{itemize}
 \item [a)] $u_1=1$,\\
 $u_2=he_{1}(x_{i_{1}})e_{1}(x_{i_2}) \cdots he_{1}(x_{i_{r-1}}){e_{1}(x_{i_{r}})}$,\\ 
 $\overline{u} = c_{i_{1}}c_{i_{2}}\cdots c_{i_{r-1}}c_{i_{r}}e_{11}$,
 
 \item [b)] $u_1=e_{-1}(x_{i_1})e_{1}(x_{j_1}) \cdots e_{1}(x_{j_{k-1}})e_{-1}(x_{i_k})$,\\
 $u_2=he_{-1}(x_{i_{k+1}})e_{-1}(x_{i_{k+2}}) \cdots he_{-1}(x_{i_{k+r-1}})$,\\ 
 $\overline{u} = b_{i_1}b_{i_2}\cdots b_{i_{k+r-1}}c_{j_1}c_{j_2}\cdots c_{j_{k-1}}e_{11}$,
 
 \item [c)] $u_1=e_{-1}(x_{i_1})e_{1}(x_{j_1}) \cdots e_{-1}(x_{i_{k}})e_{1}(x_{j_k})$\\
 $u_2=1$\\ 
 $\overline{u} = b_{i_1}b_{i_2}\cdots b_{i_k}c_{j_1}c_{j_2}\cdots c_{j_{k}}e_{11}$,
 
 \item [d)] $u_1=e_{-1}(x_{i_1})e_{1}(x_{j_1}) \cdots e_{-1}(x_{i_{k}})e_{1}(x_{j_k})$,\\
 $u_2=he_{1}(x_{j_{k+1}})e_{1}(x_{j_{k+2}}) \cdots he_{1}(x_{j_{k+r-1}})e_{1}(x_{j_{k+r}})$,\\
 $\overline{u} = b_{i_1}b_{i_2}\cdots b_{i_k}c_{j_1}c_{j_2}\cdots c_{j_{k+r-1}}c_{j_{k+r}}e_{11}$,
 
\end{itemize}
\begin{flushleft}
$(e_{12})$:
\end{flushleft}
\begin{itemize}
 \item [a)] $u_1=1$,\\
 $u_2=he_{1}(x_{i_{1}})e_{1}(x_{i_{2}}) \cdots he_{1}(x_{i_{r}})$,\\
 $\overline{u} = c_{i_{1}}c_{i_{2}}\cdots c_{i_{r}}e_{12}$,
 
 \item [b)] $u_1=e_{-1}(x_{i_1})e_{1}(x_{j_1}) \cdots e_{1}(x_{j_{k-1}})e_{-1}(x_{i_k})$,\\
 $u_2=1$,\\
 $\overline{u} = b_{i_1}b_{i_2}\cdots b_{i_k}c_{j_1}c_{j_2}\cdots c_{j_{k-1}}e_{12}$,

 \item [c)] $u_1=e_{-1}(x_{i_1})e_{1}(x_{j_1}) \cdots e_{-1}(x_{i_{k}})e_{1}(x_{j_k})$,
 \\$u_2=he_{1}(x_{j_{k+1}})e_{1}(x_{j_{k+2}}) \cdots he_{1}(x_{j_{k+r}})$,\\
 $\overline{u} = b_{i_1}b_{i_2}\cdots b_{i_k}c_{j_1}c_{j_2}\cdots c_{j_{k+r}}e_{12}$,
 
 \item [d)] $u_1=e_{-1}(x_{i_1})e_{1}(x_{j_1}) \cdots e_{1}(x_{j_{k-1}})e_{-1}(x_{i_k})$,\\
 $u_2=he_{-1}(x_{i_{k+1}})e_{-1}(x_{i_{k+2}}) \cdots he_{-1}(x_{i_{k+r-1}})e_{-1}(x_{i_{k+r}})$,\\
 $\overline{u} = b_{i_1}\cdots b_{i_{k+r}}c_{j_1}\cdots c_{j_{k-1}}e_{12}$,
 
\end{itemize}
\begin{flushleft}
 $(e_{21})$:
\end{flushleft}
\begin{itemize}
 \item [a)] $u_1=e_{1}(x_{i_1})e_{-1}(x_{j_1}) \cdots e_{-1}(x_{j_{k-1}})e_{1}(x_{i_k})$,\\
 $u_2=1$,\\
 $\overline{u} = c_{i_1}c_{i_2}\cdots c_{i_k}b_{j_1}b_{j_2}\cdots b_{j_{k-1}}e_{21}$,
 
 \item [b)] $u_1=e_{1}(x_{i_1})e_{-1}(x_{j_1}) \cdots e_{-1}(x_{j_{k-1}})e_{1}(x_{i_k})$,\\
 $u_2=he_{1}(x_{i_{k+1}})e_{1}(x_{i_{k+2}}) \cdots he_{1}(x_{i_{k+r-1}})e_{1}(x_{i_{k+r}})$,\\
 $\overline{u} = c_{i_1}c_{i_2}\cdots c_{i_{k+r-1}}c_{i_{k+r}}b_{j_1}b_{j_2}\cdots b_{j_{k-1}}e_{21}$,
 
 \item [c)] $u_1=e_{1}(x_{i_1})e_{-1}(x_{j_1}) \cdots e_{1}(x_{i_{k}})e_{-1}(x_{j_k})$,\\
 $u_2=he_{-1}(x_{j_{k+1}})e_{-1}(x_{j_{k+2}}) \cdots he_{-1}(x_{j_{k+r}})$,\\
 $\overline{u} = c_{i_1}c_{i_2}\cdots c_{i_k}b_{j_1}b_{j_2}\cdots  b_{j_{k+r}}e_{21}$,
 
 \item [d)] $u_1=1$,\\
 $u_2=he_{-1}(x_{i_{1}})e_{-1}(x_{i_{2}}) \cdots he_{-1}(x_{i_{r}})$,\\
 $\overline{u} = b_{i_{1}}b_{i_{2}}\cdots b_{i_{r}}e_{21}$,
 
\end{itemize}
\begin{flushleft}
 $(e_{22})$:
\end{flushleft}
\begin{itemize}
 \item [a)] $u_1=e_{1}(x_{i_1})e_{-1}(x_{j_1}) \cdots e_{-1}(x_{j_{k-1}})e_{1}(x_{i_k})$,\\
 $u_2=he_{1}(x_{i_{k+1}})e_{1}(x_{i_{k+2}}) \cdots he_{1}(x_{i_{k+r-1}})$,\\
 $\overline{u} = c_{i_1}c_{i_2}\cdots c_{i_{k+r-1}}b_{j_1}b_{j_2}\cdots b_{j_{k-1}}e_{22}$,
 
 \item [b)] $u_1=e_{1}(x_{i_1})e_{-1}(x_{j_1}) \cdots e_{1}(x_{i_{k}})e_{-1}(x_{j_k})$,\\
 $u_2=1$,\\
 $\overline{u} = c_{i_1}c_{i_2}\cdots c_{i_k}b_{j_1}b_{j_2}\cdots b_{j_{k}}e_{22}$,
 
 \item [c)] $u_1=e_{1}(x_{i_1})e_{-1}(x_{j_1}) \cdots e_{1}(x_{i_{k}})e_{-1}(x_{j_k})$,\\
 $u_2=he_{-1}(x_{j_{k+1}})e_{-1}(x_{j_{k+2}}) \cdots he_{-1}(x_{j_{k+r-1}})e_{-1}(x_{j_{k+r}})$,\\
 $\overline{u} = c_{i_1}c_{i_2}\cdots c_{i_k}b_{j_1}b_{j_2}\cdots b_{j_{k+r}}e_{22}$,
 
 \item [d)] $u_1=1$,\\
 $u_2=he_{-1}(x_{i_{1}})e_{-1}(x_{i_{2}}) \cdots he_{-1}(x_{i_{r-1}})e_{-1}(x_{i_{r}})$,\\
$\overline{u} = b_{i_{1}}b_{i_{2}}\cdots b_{i_{r}}e_{22}$,

\end{itemize}
In each case for the $e_{ij}$, note that no relation among $G$-monomials in $(a)$, $(b)$, $(c)$ and $(d)$ is possible. For instance,  consider the case $(e_{11})$. We have four possibilities: in the first the total degree of the variables $b$ is zero, in the second the total degree of variables of type $b$ is larger than the degree of variables of type $c$, in the third the degrees for the variables of type types $b$ and $c$ coincide, and, finally, in the fourth case, the total degree of variables of type $c$ is larger than the total degree of variables $b$. Similar behaviour occurs for each elementary matrix $e_{ij}$, so we have only to check whether $G$-monomials of types $(a), (b), (c)$ and $(d)$ are linearly independent separately. Since the variables $b_i$, $c_i$, with $i\geq 1$, are in ascending order, the monomials can be recovered from their evaluation in the generic matrices. Furthermore, since the variables $b_i$, $c_i$ are algebraically independent, no relationship between the corresponding elements is possible. This proves the lemma.
 \end{proof}

\begin{theorem}
 Let $G=D_n$. The $G$-identities of Lemma \ref{Lema9} form a basis of the $G$-identities for the pair $(M_2( \mathbb{C}), sl_2 ( \mathbb{C}))$.
\end{theorem}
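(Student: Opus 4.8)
The plan is to deduce the theorem directly from Lemmas \ref{Lema10} and \ref{Lema11}, exactly as in the cyclic case; note that since $D_n$ is non-abelian for $n\geq 3$ the duality of Proposition \ref{Dualidade2_fraca} is not available, so one cannot reduce to graded identities and the argument must be carried out directly in $\mathbb{C}\langle X;G\rangle$. Write $\mathcal{J}=Id^G(M_2(\mathbb{C}),sl_2(\mathbb{C}))$ for the $G$-ideal of weak $G$-identities of the pair, and let $I\subseteq\mathbb{C}\langle X;G\rangle$ be the $G$-ideal generated by the ten relations of Lemma \ref{Lema9}. By Lemma \ref{Lema9} each of these relations lies in $\mathcal{J}$, and since $\mathcal{J}$ is a $G$-ideal we get $I\subseteq\mathcal{J}$ immediately. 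It remains to prove $\mathcal{J}\subseteq I$.

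For the reverse inclusion I would take an arbitrary $f\in\mathcal{J}$. By Lemma \ref{Lema10} the image of $f$ in $\mathbb{C}\langle X;G\rangle/I$ lies in the span of $\{m+I\mid m\in B\}$, so there is a finite linear combination $f_0=\sum_{m\in B}\alpha_m m$ with $\alpha_m\in\mathbb{C}$ and $f\equiv f_0\pmod I$. Since $I\subseteq\mathcal{J}$ and $f\in\mathcal{J}$, it follows that $f_0=f-(f-f_0)\in\mathcal{J}$; that is, $f_0$ is itself a weak $G$-identity for the pair. Then Lemma \ref{Lema11} forces $\alpha_m=0$ for every $m\in B$, hence $f_0=0$ and $f\in I$. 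Therefore $\mathcal{J}=I$, which is precisely the assertion that the relations of Lemma \ref{Lema9} form a basis of the weak $G$-identities of $(M_2(\mathbb{C}),sl_2(\mathbb{C}))$.

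At the level of the theorem there is thus no genuine obstacle: all the content is already packaged in the two preceding lemmas, and the theorem is a formal consequence of a spanning statement plus a linear independence statement. The real work — and where I would expect any difficulty — is internal to those lemmas. In Lemma \ref{Lema10} the delicate point is the normal-form reduction, in particular moving every occurrence of $he_\alpha$ past the $e_{\pm1}$'s to arrange the conditions $(\mathcal{B}_1)$--$(\mathcal{B}_3)$; this uses the redundant identities (6), (8), (10) of Lemma \ref{Lema9}, which is exactly why they are retained even though (5)--(6), (7)--(8) and (9)--(10) are pairwise equivalent. In Lemma \ref{Lema11} the heart is the observation that a monomial of $B$, evaluated on the generic traceless matrices $X_i=a_i(e_{11}-e_{22})+b_ie_{12}+c_ie_{21}$, produces a scalar multiple of a single elementary matrix $e_{ij}$, the scalar being a monomial in the algebraically independent $b_i$, $c_i$; separating the four subtypes $(a)$--$(d)$ for each $e_{ij}$ by comparing total degrees in the $b$'s versus the $c$'s then yields the required linear independence. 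Granting both lemmas, the theorem follows as above.
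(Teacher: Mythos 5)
Your proof is correct and follows exactly the route the paper takes: the theorem is deduced as a formal consequence of the spanning statement (Lemma \ref{Lema10}) and the linear independence statement (Lemma \ref{Lema11}), with the reduction $f\equiv f_0 \pmod I$ and the conclusion $f_0=0$ being the standard argument the paper leaves implicit when it says the result is ``an immediate consequence'' of those two lemmas. Your additional remarks about where the genuine work lies (the normal-form reduction and the evaluation on generic matrices) accurately reflect the structure of the paper's argument.
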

\begin{proof}
 It is an immediate consequence of Lemmas \ref{Lema10} and \ref{Lema11}.
 \end{proof}

\section{The groups \texorpdfstring{$A_4$, $A_5$ and $S_4$}{}}
Throughout this section, $G$ stands for one of the groups $G=A_4$, $A_5$ or $S_4$. The following result can be found in \cite[Lemma 12]{Berele} or in \cite[Lemma 14]{KM}.
\begin{lemma}
Suppose that $G$ acts faithfully on $sl_2( \mathbb{C})$. Then $sl_2( \mathbb{C})$ is an irreducible $ \mathbb{C} G$-module.
\end{lemma}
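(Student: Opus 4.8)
The plan is to invoke complete reducibility of complex representations of finite groups and then compare dimensions with the list of irreducible representations of $A_4$, $S_4$ and $A_5$.

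First I would record that the $G$-action on $sl_2(\mathbb{C})$ is the restriction of the adjoint (conjugation) action of $PGL_2(\mathbb{C})$, so $sl_2(\mathbb{C})$ is a three-dimensional $\mathbb{C}G$-module, and by hypothesis this module is faithful. By Maschke's theorem $sl_2(\mathbb{C})$ is a direct sum of irreducible $\mathbb{C}G$-submodules; were it reducible, it would be a sum of at least two irreducibles, each of dimension $1$ or $2$.

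Next I would run through the three groups. The irreducible complex representations of $A_4$ have dimensions $1,1,1,3$; those of $S_4$ have dimensions $1,1,2,3,3$; and those of $A_5$ have dimensions $1,3,3,4,5$. Hence a reducible three-dimensional $\mathbb{C}G$-module must be a sum of three one-dimensional representations when $G=A_4$ or $G=A_5$, and either a sum of three one-dimensional representations or the sum of a one-dimensional and the two-dimensional irreducible when $G=S_4$. In each of these cases every irreducible constituent is trivial on a fixed non-trivial normal subgroup of $G$: the one-dimensional representations of $A_4$ and of $S_4$ are trivial on the derived subgroup, each of which contains the Klein four-group $V$ (normal in both $A_4$ and $S_4$); the two-dimensional representation of $S_4$ factors through $S_4/V\cong S_3$; and the only one-dimensional representation of $A_5$ is trivial. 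Therefore the kernel of such a reducible module contains $V$ when $G=A_4$ or $G=S_4$, and is all of $A_5$ when $G=A_5$, contradicting faithfulness. This forces $sl_2(\mathbb{C})$ to be irreducible.

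The argument is mostly bookkeeping, and the only place that calls for a second's attention is $G=S_4$, where one must also discard the decomposition into a one-dimensional plus the two-dimensional irreducible, i.e. observe that the two-dimensional irreducible of $S_4$ is not faithful. If one prefers to bypass character tables, a more geometric route is available: equip $sl_2(\mathbb{C})$ with the $G$-invariant non-degenerate trace form $\kappa(x,y)=\operatorname{tr}(xy)$; a one-dimensional invariant subspace would be spanned by some $v$ with $\kappa(v,v)=-2\det v$, so $v$ is either regular semisimple, which forces $G$ into the normalizer of a maximal torus of $PGL_2(\mathbb{C})$ and hence makes $G$ cyclic or dihedral, or nilpotent, which forces $G$ into a Borel subgroup and hence makes $G$ cyclic; both are incompatible with $G\in\{A_4,A_5,S_4\}$.
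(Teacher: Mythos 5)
Your argument is correct. Note that the paper does not actually prove this lemma; it only cites \cite[Lemma 12]{Berele} and \cite[Lemma 14]{KM}, and the argument in those sources is essentially the character-theoretic bookkeeping you carry out: Maschke plus the dimension lists $1,1,1,3$ for $A_4$, $1,1,2,3,3$ for $S_4$, and $1,3,3,4,5$ for $A_5$, together with the observation that every irreducible constituent of dimension $\le 2$ kills a fixed non-trivial normal subgroup (the Klein four-group for $A_4$ and $S_4$, all of $A_5$ in the remaining case), contradicting faithfulness. You correctly handle the one case that is easy to overlook, namely the $1+2$ decomposition for $S_4$, by noting that the two-dimensional irreducible factors through $S_4/V\cong S_3$. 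Your alternative geometric argument is also sound and is genuinely different in flavour: since any reducible three-dimensional module has a one-dimensional summand spanned by some $v\in sl_2(\mathbb{C})$, and $v$ is either regular semisimple or regular nilpotent, the group $G$ must normalize a maximal torus or lie in a Borel subgroup, forcing $G$ to be cyclic or dihedral; this avoids character tables entirely and only uses the classification of the finite subgroups of $PGL_2(\mathbb{C})$ already invoked in the paper.
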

The three cases for the group $G$ are similar and are treated simultaneously. The same phenomenon occurs when computing the identities with $G$-action for $M_2( \mathbb{C})$ and $sl_2( \mathbb{C})$. As noted in \cite{Berele} and in \cite{KM}, we can decompose $ \mathbb{C} G$ into a direct sum of an ideal $J_0$ that annihilates $sl_2( \mathbb{C})$ and a copy of $M_3( \mathbb{C})$. Let $v_1=e_{11}-e_{22}$, $v_2=e_{12}$ and $v_3=e_{21}$. As an algebra of linear transformations on the three-dimensional space $sl_2( \mathbb{C})$, $M_3( \mathbb{C})$ has basis $\{\epsilon_{ij}\mid 1\leq i,j \leq 3\}$ (the canonical basis) which satisfies $\epsilon_{ij}(v_k)=\delta_{jk}v_i$. Explicitly, this basis acts on $x=a(e_{11}-e_{22})+be_{12}+ce_{21}\in sl_2( \mathbb{C})$ as follows:
\[
\epsilon_{11}(x)=\matriz{a}{0}{0}{-a},\quad \epsilon_{12}(x)=\matriz{b}{0}{0}{-b},\quad\epsilon_{13}(x)=\matriz{c}{0}{0}{-c},
\]
\[
\epsilon_{21}(x)=\matriz{0}{a}{0}{0},\quad \epsilon_{22}(x)=\matriz{0}{b}{0}{0},\quad\epsilon_{23}(x)=\matriz{0}{c}{0}{0},
\]
\[
\epsilon_{31}(x)=\matriz{0}{0}{a}{0},\quad \epsilon_{32}(x)=\matriz{0}{0}{b}{0},\quad\epsilon_{33}(x)=\matriz{0}{0}{c}{0}.
\]

\begin{lemma}\label{Lema13}
Let $G=A_4$, $A_5$ or $S_4$. The following relations are $G$-identities for the pair $(M_2( \mathbb{C}),sl_2( \mathbb{C}))$:
\begin{enumerate}
 \item  $\epsilon_{11}(x)+\epsilon_{22}(x)+\epsilon_{33}(x)=x$,
 \item $[\epsilon_{1\alpha}(x_1),\epsilon_{1\beta}(x_2)]=0$, $\alpha$, $\beta\in\{1,2,3\}$,
 \item $\epsilon_{1\alpha}(x_1)$ anticommutes with $\epsilon_{ij}(x_2)$, $i=2$, $3$ and $j$, $\alpha\in\{1,2,3\}$,
 \item $\epsilon_{i\alpha}(x_1)\epsilon_{j\beta}(x_2)\epsilon_{i\gamma}(x_3)=\epsilon_{1\alpha}(x_1)\epsilon_{1\beta}(x_2)\epsilon_{i\gamma}(x_3)$, where $i$, $j=2$, $3$ $(i\neq j)$ and $\alpha$, $\beta$, $\gamma=1$, $2$, $3$,
 \item $\epsilon_{i\alpha}(x_1)\epsilon_{i\beta}(x_2)=0$, for $i=2$, $3$ and $\alpha$, $\beta\in \{1,2,3\}$,
 \item $\epsilon_{3\alpha}(x_1)\epsilon_{2\beta}(x_2)+\epsilon_{2\beta}(x_2)\epsilon_{3\alpha}(x_1)=\epsilon_{1\alpha}(x_1)\epsilon_{1\beta}(x_2)$, where $\alpha$, $\beta=1$, $2$, $3$,

\item $\epsilon_{1\alpha}(x_1)\epsilon_{j\beta}(x_2)=\epsilon_{1\beta}(x_2)\epsilon_{j\alpha}(x_1)$, with $j\in \{2,3\}$ and $\alpha$, $\beta\in\{1,2,3\}$,
\item $\epsilon_{2\alpha}(x_1)\epsilon_{3\beta}(x_2)=\epsilon_{2\beta}(x_2)\epsilon_{3\alpha}(x_1)$, with $\alpha$, $\beta\in\{1,2,3\}$.
\end{enumerate}
\end{lemma}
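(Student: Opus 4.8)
The plan is to reduce all eight relations to a short computation with three fixed matrices. Two structural observations make this possible. First, each relation in the list is \emph{multilinear}: every variable $x_k$ occurring in it appears exactly once, inside a single factor $\epsilon_{i\alpha}(x_k)$, and each $\epsilon_{i\alpha}$ is a $\mathbb{C}$-linear operator on $sl_2(\mathbb{C})$. Hence such a relation $w$ is a weak $G$-identity for the pair $(M_2(\mathbb{C}),sl_2(\mathbb{C}))$ precisely when it vanishes under every substitution of its variables by elements of a fixed basis of $sl_2(\mathbb{C})$, and we use the basis $v_1=e_{11}-e_{22}$, $v_2=e_{12}$, $v_3=e_{21}$ already fixed in the text. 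Second, by the defining property $\epsilon_{ij}(v_k)=\delta_{jk}v_i$, under a substitution $x_k\mapsto v_{\sigma(k)}$ the factor $\epsilon_{i\alpha}(x_k)$ becomes $v_i$ when $\sigma(k)=\alpha$ and $0$ otherwise. Consequently each monomial of $w$ evaluates, under any such substitution, either to $0$ or to a product $v_{i_1}v_{i_2}\cdots v_{i_r}$ computed in the associative algebra $M_2(\mathbb{C})$, with no scalar coefficient; moreover whether a given monomial survives depends only on the index conditions $\sigma(k)=\alpha$, which are the same on the two sides of each relation.

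It then remains to record the products of $v_1$, $v_2$, $v_3$ in $M_2(\mathbb{C})$, all immediate from $e_{ij}e_{kl}=\delta_{jk}e_{il}$:
\[
v_1^2=v_2v_3+v_3v_2=I,\qquad v_2^2=v_3^2=0,\qquad v_1v_2=v_2=-v_2v_1,\qquad v_1v_3=-v_3=-v_3v_1,\qquad v_2v_3=e_{11},\qquad v_3v_2=e_{22}.
\]
With this table the eight relations become one-line checks. Relation (1) on $x=av_1+bv_2+cv_3$ reads $\epsilon_{11}(x)+\epsilon_{22}(x)+\epsilon_{33}(x)=av_1+bv_2+cv_3=x$; relations (2) and (3) follow from $[v_1,v_1]=0$ and $v_1v_j+v_jv_1=0$ for $j=2,3$; relation (5) is $v_2^2=v_3^2=0$; relation (4) is the pair of identities $v_2v_3v_2=v_2=v_1v_1v_2$ and $v_3v_2v_3=v_3=v_1v_1v_3$ (according as $i=2$ or $i=3$); relation (6) is $v_3v_2+v_2v_3=I=v_1v_1$; and relations (7), (8) hold because on each side the single surviving monomial evaluates to $v_1v_j$, respectively to $v_2v_3$, under the same index conditions. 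An equivalent bookkeeping that avoids invoking multilinearity is to note directly that $\epsilon_{i\alpha}(x)=\lambda_\alpha(x)v_i$, where $\lambda_1,\lambda_2,\lambda_3$ are the coordinate functionals on $sl_2(\mathbb{C})$ in the basis $v_1,v_2,v_3$; then every monomial is a product of $\lambda$'s, which commute, times a product of $v$'s, and the same table finishes the argument.

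There is no genuine obstacle here: the verification is entirely routine. The only point to keep straight is the mixed nature of the relations — the $\epsilon_{ij}$ are elements of $\mathbb{C}G$ that act \emph{first}, and only their outputs, lying in $sl_2(\mathbb{C})$, are then multiplied in the overalgebra $M_2(\mathbb{C})$ — together with the small amount of index bookkeeping needed to see that the two sides of each relation survive under the same substitutions. It is also worth noting that the groups $A_4$, $A_5$, $S_4$ play no further role at this stage: the operators $\epsilon_{ij}$ are completely determined by the common action table displayed before the statement, so the computation is literally the same in the three cases.
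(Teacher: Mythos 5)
Your verification is correct, and it is exactly the ``direct evaluation'' the paper invokes, just carried out explicitly: the reduction to $\epsilon_{i\alpha}(x)=\lambda_\alpha(x)v_i$ together with the multiplication table of $v_1=e_{11}-e_{22}$, $v_2=e_{12}$, $v_3=e_{21}$ settles all eight relations, and I checked each entry of your table and each of the eight one-line computations. Nothing further is needed.
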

\begin{proof}
Direct (and simple) evaluation of each $G$-polynomial.
\end{proof}

Let $B$ be the set of monomials $u=u_1u_2$ such that:
\begin{itemize}
 \item [($\mathcal{B}_1$)] $u_1=\epsilon_{11}(x_1)^{l_1}\cdots\epsilon_{11}(x_k)^{l_k}\epsilon_{12}(x_1)^{m_1}\cdots\epsilon_{12}(x_k)^{m_k}\epsilon_{13}(x_1)^{n_1}\cdots \epsilon_{13}(x_k)^{n_k}$,
 \item [($\mathcal{B}_2$)] $u_2\in \{1,\epsilon_{2i}(x_\alpha), \epsilon_{3j}(x_\alpha),\epsilon_{2i}(x_\alpha)\epsilon_{3j}(x_\beta)\}$,
 \item [($\mathcal{B}_3$)] In $u_2$ if occurs $\epsilon_{ij}(x_\alpha)$, $i\in \{2,3\}$, then for each $\epsilon_{1s}(x_p)$ in $u_1$ we have $s<j$ or $s=j$ and $ p\leq \alpha$,
 \item [($\mathcal{B}_4$)] In $u_2=\epsilon_{2i}(x_\alpha)\epsilon_{3j}(x_\beta)$, we have $i< j$ or $i=j$ and $\alpha\leq \beta.$ 
\end{itemize}
\begin{lemma}\label{Lema14}
 If $I$ is the weak $G$-ideal generated by the identities of Lemma \ref{Lema13}, then $\mathbb{C}\langle X;G\rangle/I$ is spanned by the set $\{u+I\mid u\in B\}$.
\end{lemma}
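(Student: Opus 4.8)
The plan is to argue exactly as in Lemmas~\ref{Lema7} and~\ref{Lema10}: treat the relations of Lemma~\ref{Lema13} as rewriting rules and use them to push an arbitrary $G$-monomial, modulo $I$, into one of the normal forms prescribed by $(\mathcal{B}_1)$--$(\mathcal{B}_4)$. Since the spanning set $B$ is not a set of multilinear polynomials, the argument is a direct combinatorial reduction rather than a representation-theoretic one.

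First I would reduce to monomials in the letters $\epsilon_{ij}(x_p)$. As $\epsilon_{11}+\epsilon_{22}+\epsilon_{33}$ is the unit of the $M_3(\mathbb{C})$-summand of $\mathbb{C}G$ and the complementary ideal $J_0$ annihilates $sl_2(\mathbb{C})$, identity~(1) of Lemma~\ref{Lema13} says precisely that $e_{J_0}(x)\in I$, where $e_{J_0}$ denotes the unit of $J_0$; as $I$ is a $\mathbb{C}G$-submodule of $\mathbb{C}\langle X;G\rangle$, this yields $j_0(x)\in I$ for every $j_0\in J_0$, and hence $x^g\equiv\sum_{i,j}c^{(g)}_{ij}\epsilon_{ij}(x)\pmod I$ for suitable scalars $c^{(g)}_{ij}$. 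So it suffices to rewrite a monomial in the letters $\epsilon_{ij}(x_p)$. Next I would split off the first row: by~(3) each $\epsilon_{1\alpha}(x_p)$ anticommutes with every lower-row letter $\epsilon_{ij}(x_q)$, $i\in\{2,3\}$, so up to a sign all first-row letters can be collected on the left, and by~(2) they commute among themselves, so the prefix they form can be arranged into the shape $(\mathcal{B}_1)$; call it $u_1$, and let $w$ be the remaining suffix, a word in the letters $\epsilon_{2j}(x_p),\epsilon_{3j}(x_p)$.

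Then I would shorten $w$. By~(5) we may assume no two consecutive letters of $w$ lie in the same row, so $w$ alternates between rows $2$ and $3$; if $|w|\ge 3$ it contains a factor $\epsilon_{i\alpha}(x)\epsilon_{j\beta}(y)\epsilon_{i\gamma}(z)$ with $\{i,j\}=\{2,3\}$, which by~(4) equals $\epsilon_{1\alpha}(x)\epsilon_{1\beta}(y)\epsilon_{i\gamma}(z)$; moving the two new first-row letters into $u_1$ via~(3) and~(2) strictly decreases the number of lower-row letters, so this terminates with $|w|\le 2$. If $w=\epsilon_{3\alpha}(x)\epsilon_{2\beta}(y)$, identity~(6) replaces it by $-\epsilon_{2\beta}(y)\epsilon_{3\alpha}(x)+\epsilon_{1\alpha}(x)\epsilon_{1\beta}(y)$, and absorbing the first-row part into $u_1$ leaves $w=u_2$ in one of the forms listed in $(\mathcal{B}_2)$, with its row-$2$ letter (if any) to the left of its row-$3$ letter (if any).

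Finally I would enforce the ordering conditions. Identity~(7) interchanges the pair (second index, variable) carried by the last letter of $u_1$ with the one carried by the first letter of $u_2$, keeping the first-row/lower-row pattern of the two slots intact; composing with re-sortings of $u_1$ by~(2), this realizes an arbitrary transposition of the attached data among the slots of $u_1$ and the first slot of $u_2$, while~(8) interchanges the data in the two slots of $u_2$ when $u_2=\epsilon_{2i}(x_\alpha)\epsilon_{3j}(x_\beta)$. Using these moves I can route the (at most two) largest pairs (second index, then variable) to the slots of $u_2$, the larger of them to the row-$3$ slot, and the rest into $u_1$; after a last sorting of $u_1$ by~(2) this is exactly $(\mathcal{B}_3)$ together with $(\mathcal{B}_4)$, so the resulting monomial lies in $B$. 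The step needing the most care is this last one: one must check that imposing $(\mathcal{B}_3)$--$(\mathcal{B}_4)$ never recreates an equal pair of adjacent lower-row letters nor spoils $(\mathcal{B}_1)$--$(\mathcal{B}_2)$, and that the routing is actually achievable; both hold because~(7) and~(8) preserve the row of every slot and only permute the attached data, the transpositions they provide generate the full symmetric group on the relevant slots, and re-sorting $u_1$ via~(2) is harmless. Putting the three steps together shows that $\mathbb{C}\langle X;G\rangle/I$ is spanned by $\{u+I\mid u\in B\}$.
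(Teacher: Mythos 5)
Your proposal is correct and follows essentially the same route as the paper's proof: reduce modulo identity (1) to monomials in the $\epsilon_{ij}(x_p)$, collect the first-row letters on the left via (2) and (3), shorten the alternating lower-row suffix to length at most two via (4), (5) and (6), and then enforce $(\mathcal{B}_3)$--$(\mathcal{B}_4)$ by permuting the attached data with (2), (7) and (8). Your reformulation of the final sorting step as generating the symmetric group on the slots is just a cleaner packaging of the paper's explicit case analysis.
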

 \begin{proof}
Since $x=\sum \epsilon_{ii}(x)$ and $\{\epsilon_{ij}\mid 1\leq i,j\leq 3\}$ generates an ideal of $ \mathbb{C} G$, then each $G$-monomial can be written, modulo $I$, as a linear combination of monomials in the $\epsilon_{ij}(x_\alpha)$. Given such a monomial, we can put, up to a sign, all terms $\epsilon_{1j}(x_k)$ on the left side (identity (3)), then to the right of the monomial we will have a product of factors of the types $\epsilon_{2i}(x_\alpha)$ and $\epsilon_{3j}(x_\beta)$. On the other hand, the identity (5) eliminates the occurrence of monomials of the form $\epsilon_{ij}(x_\alpha)\epsilon_{ik}(x_\beta), \,i\in \{2,3\}$, so the terms $\epsilon_{2j}$ and $\epsilon_{3k}$ appear alternately. Now, using the identities (2), (3) and (4) successively, we can write modulo $I$ each monomial $m\in  \mathbb{C}\langle X;G\rangle\setminus I$ in the form $m=u_1u_2$, where
  \begin{itemize}
    \item $u_1=\epsilon_{11}(x_1)^{l_1}\cdots\epsilon_{11}(x_k)^{l_k}\epsilon_{12}(x_1)^{m_1}\cdots\epsilon_{12}(x_k)^{m_k}\epsilon_{13}(x_1)^{n_1}\cdots \epsilon_{13}(x_k)^{n_k}$,
    
    \item $u_2\in \{1,\epsilon_{2i}(x_\alpha), \epsilon_{3j}(x_\alpha), \epsilon_{2i}(x_\alpha)\epsilon_{3j}(x_\beta), \epsilon_{3j}(x_\beta)\epsilon_{2i}(x_\alpha)\}.$ 
  \end{itemize}
Using identity (6) we can send the elements $\epsilon_{3i}(x_\alpha)$ to the right of the $\epsilon_{2j}(x_\beta)$. If $u_2=\epsilon_{2j}(x_\alpha)$ or $u_2=\epsilon_{3j}(x_\alpha)$, and $\epsilon_{1s}(x_p)$ occurs in $u_1$ we put (if necessary) these two factors side by side (identity (2)), then we permute the second indices using the identity (7) and then once again we use (2) to reorder the factors in $u_1$. On the other hand, if $u=u_1 u_2$ with $u_ 2=\epsilon_{2i}(x_\alpha)\epsilon_{3j}(x_\beta)$, we proceed as before in order to arrange the first element of $u_2$. Choose $\epsilon_{1p}(x_q)$ in $u_1$ with $p$ as large as possible such that $j\leq p$. Now choose among these factors $\epsilon_{1p}(x_q)$, the one with the largest $q$. If there is no such $p$, then we are done. Otherwise, we will have three situations to consider:
\begin{itemize}
 \item [a)] If $j=p$ and $q=\beta$ nothing needs to be done.
 \item [b)] If $j=p$ and $\beta < q$, we swap $x_\beta$ for $x_q$ (identity (2), (3) and (7)). Thus, we will obtain:
 $u=\widetilde{u_1}\epsilon_{2i}(x_\alpha)\epsilon_{3p}(x_q)$ and this concludes the argument, since $p<i$ or $i=p$, and $\beta<q\leq\alpha$ (because $\epsilon_{2i}(x_\alpha)$ satisfies ($\mathcal{B}_3$)).
 \item [c)] Suppose $j<p$. In the same way as we did in (b), let us change $j$ to $p$ and $x_\beta$ to $x_q$, obtaining:

    $u=\widetilde{u_1}\epsilon_{2i}(x_\alpha)\epsilon_{3p}(x_q)$, so $j<p\leq i$ and $\epsilon_{2i}(x_\alpha)$ continues to satisfy ($\mathcal{B}_3$).
   \end{itemize}
 Finally, use identity (8) in order to obtain ($\mathcal{B}_4$).
 \end{proof}

Here we establish some notations: 
 \begin{itemize}
  \item $\underline{n}:=(l_1,\ldots,l_k,m_1,\ldots,m_k,n_1,\ldots,n_k)$, $l_i$, $m_i$, $n_i\geq 0$,
  \item $u_1(x)^{\underline{n}}:=\prod_{i=1}^k\epsilon_{11}(x_i)^{l_i}\prod_{i=1}^k\epsilon_{12}(x_i)^{m_i}\prod_{i=1}^k\epsilon_{13}(x_i)^{n_i}$,
  \item $U_1^{\underline{n}}:=a_1^{l_1}\cdots a_k^{l_k}b_1^{m_1}\cdots b_k^{m_k}c_1^{n_1}\cdots c_k^{n_k}$,
  \item $U_2:=u_2(X_1,\ldots, X_k)$ the evaluation of $u_2$ in the generic matrices $X_i=a_i(e_{11}-e_{22})+b_ie_{12}+c_ie_{21}$. 
 \end{itemize}
 
\begin{lemma}\label{Lema15}
 No non-trivial linear combination of elements of $B$ is a $G$-identity for the pair $(M_2( \mathbb{C}), sl_2( \mathbb{C}))$.
\end{lemma}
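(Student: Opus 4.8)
The plan is to mimic the proof of Lemma~\ref{Lema11} (the dihedral case): evaluate a hypothetical nontrivial linear combination $f=\sum_{\underline n} u_1(x)^{\underline n} u_{\underline n}$ of basis monomials on the generic matrices $X_i=a_i(e_{11}-e_{22})+b_ie_{12}+c_ie_{21}$, and show that vanishing forces all coefficients to be zero. First I would observe that, by the explicit action of the $\epsilon_{ij}$ recorded before Lemma~\ref{Lema13}, evaluating $u_1(x)^{\underline n}=\prod \epsilon_{11}(x_i)^{l_i}\prod\epsilon_{12}(x_i)^{m_i}\prod\epsilon_{13}(x_i)^{n_i}$ on the generic matrices produces the scalar $U_1^{\underline n}=a_1^{l_1}\cdots a_k^{l_k}b_1^{m_1}\cdots b_k^{m_k}c_1^{n_1}\cdots c_k^{n_k}$ times a fixed diagonal matrix $e_{11}\pm e_{22}$ (the sign depending on the parity of $\sum (l_i+m_i+n_i)$). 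Since the monomials $U_1^{\underline n}$ in the algebraically independent variables $a_i,b_i,c_i$ are pairwise distinct for distinct $\underline n$, one can recover each $\underline n$ from its evaluation, and hence $f(X_1,\ldots,X_k)=0$ forces, for every fixed $\underline n$, the partial sum $u_1(x)^{\underline n}\cdot (\text{evaluation of }u_{\underline n})$ to vanish separately. Thus it suffices to fix $\underline n$ and show that the $u_2$-parts, i.e.\ the elements of $\{1,\epsilon_{2i}(x_\alpha),\epsilon_{3j}(x_\alpha),\epsilon_{2i}(x_\alpha)\epsilon_{3j}(x_\beta)\}$ satisfying ($\mathcal B_2$)--($\mathcal B_4$), evaluate to linearly independent matrices over the polynomial ring $\mathbb C[a_i,b_i,c_i]$.

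Next I would split the $u_2$-evaluations $U_2$ according to which elementary matrix they are proportional to: $u_2=1$ gives $e_{11}-e_{22}$ (or more precisely contributes the diagonal part), $u_2=\epsilon_{2i}(x_\alpha)$ gives a multiple of $e_{12}$, $u_2=\epsilon_{3j}(x_\alpha)$ a multiple of $e_{21}$, and $u_2=\epsilon_{2i}(x_\alpha)\epsilon_{3j}(x_\beta)$ a multiple of $e_{11}$ (one checks $\epsilon_{2i}(x_\alpha)\epsilon_{3j}(x_\beta)$ maps $v_k$ into the $e_{11}$-line, since $\epsilon_{3j}$ lands in $\mathbb C e_{21}$ and $\epsilon_{2i}$ sends $e_{21}$-type into $e_{12}$-type, so the product sends $sl_2$ into $\mathbb C e_{11}$ up to the diagonal normalization; in any case the four families land in four distinct ``slots''). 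So linear independence need only be checked within each family. Within a family the scalar attached to $\epsilon_{2i}(x_\alpha)$ is the single variable of type $i$ with index $\alpha$ (namely $a_\alpha$, $b_\alpha$ or $c_\alpha$ according to $i=1,2,3$), and similarly for $\epsilon_{3j}(x_\beta)$ and for the degree-two products $\epsilon_{2i}(x_\alpha)\epsilon_{3j}(x_\beta)$, which evaluate to a product of two such variables. Distinct basis monomials $u_2$ within a family thus give distinct monomials in the $a,b,c$ variables (the normalization conditions ($\mathcal B_3$)--($\mathcal B_4$) only remove redundancies and never identify two genuinely different $u_2$'s with the same evaluation), so they are linearly independent over $\mathbb C$; combining with the algebraic independence of $\{a_i,b_i,c_i\}$ the full $f$ must be trivial.

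The main obstacle, and the step requiring genuine care, is the combinatorial bookkeeping in the middle: one must verify precisely that the monomials $U_1^{\underline n}\cdot U_2$ arising from \emph{distinct} admissible basis monomials $u=u_1u_2$ are all distinct monomials in $\mathbb C[a_i,b_i,c_i]$, i.e.\ that the normal form imposed by ($\mathcal B_1$)--($\mathcal B_4$) really is ``reduced'' in the sense that no two distinct reduced words have equal evaluation. Concretely, from an evaluation $U_1^{\underline n}U_2\,e_{ij}$ one reads off $\underline n$ from the total multidegree minus the contribution of $u_2$ (which has degree $\le 2$ and a known ``shape'' determined by $e_{ij}$ and the family), and then reads off $u_2$ itself from the remaining one or two variables and the sign/slot. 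I would organize this by first disposing of the $u_2=1$ case using exactly the argument of Lemma~\ref{Lema11} (recovering $\underline n$ from $U_1^{\underline n}$), then handling the three remaining families one at a time, in each case writing down the general evaluation and checking injectivity of $u\mapsto \overline u$ on $B$. This is routine but must be done carefully, precisely because the generators $\epsilon_{ij}$ are not themselves idempotents acting disjointly on all of $sl_2$, so several superficially different words could a priori collapse to the same matrix; the conditions ($\mathcal B_1$)--($\mathcal B_4$) are designed exactly to prevent this, and the proof is complete once that is confirmed.
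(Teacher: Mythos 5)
Your overall strategy is the paper's: evaluate on the generic matrices, separate contributions by which elementary matrix $e_{pq}$ they land in, and then argue that a basis monomial $u=u_1u_2$ can be recovered from its evaluation. However, there is a genuine gap at the very first step, and it is exactly the delicate point of the lemma. You claim that because the monomials $U_1^{\underline n}$ are pairwise distinct, the identity $f(X_1,\ldots,X_k)=0$ splits into separate identities for each fixed $\underline n$. This does not follow: the terms of $f$ evaluate to \emph{products} $U_1^{\underline n}\cdot U_2$, and distinct pairs $(\underline n,u_2)$ can a priori yield the same total monomial. For instance, $\epsilon_{11}(x_1)\epsilon_{21}(x_2)$ and $\epsilon_{11}(x_2)\epsilon_{21}(x_1)$ both evaluate to $a_1a_2e_{12}$; distinctness of the $U_1$-parts ($a_1$ versus $a_2$) is no help, since $a_1\cdot a_2=a_2\cdot a_1$. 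So the collisions you need to exclude are not "two genuinely different $u_2$'s with the same evaluation" within a family, as you put it, but rather two different ways of splitting one monomial of $\mathbb C[a_i,b_i,c_i]$ into a $u_1$-part and a $u_2$-part. Your subsequent within-family independence argument for the $u_2$'s is therefore aimed at the wrong target, and your third paragraph, while correctly identifying injectivity of $u\mapsto\overline u$ on $B$ as the crux, defers precisely the verification that carries the proof.

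The missing mechanism, which is what the paper supplies, is a linear order on the variables, $a_1<a_2<\cdots<b_1<b_2<\cdots<c_1<c_2<\cdots$ (i.e.\ $E^s_p<E^j_\alpha$ iff $s<j$, or $s=j$ and $p\le\alpha$), chosen so that condition $(\mathcal B_3)$ says exactly that every variable contributed by $u_1$ is $\le$ every variable contributed by $u_2$, and $(\mathcal B_4)$ orders the two variables of $u_2$ when it has length two. Consequently, from the monomial $\overline u=\pm m_1m_2\widehat{m_3}e_{pq}$ one first reads off the elementary matrix $e_{pq}$ (which fixes the shape of $u_2$: empty, $\epsilon_{2i}$, $\epsilon_{3j}$, or $\epsilon_{2i}\epsilon_{3j}$, with the $u_2=1$ terms isolated beforehand via the $e_{22}$-entry), then identifies the variables of $U_2$ as the one or two \emph{largest} variables of the monomial, and finally recovers $u_1$ from what remains since its variables appear in the normal order of $(\mathcal B_1)$. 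In my example above, only the first of the two words satisfies $(\mathcal B_3)$, which is why no collision occurs inside $B$. Without stating this order and observing that $(\mathcal B_3)$--$(\mathcal B_4)$ make the $u_2$-variables maximal, the "routine bookkeeping" you postpone cannot actually be carried out, so the argument as written is incomplete.
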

 \begin{proof}
 If $E_j^1=a_j$, $E_j^2=b_j $ and $E_j^3=c_j$, then $U_2$ evaluates on $I$, $E_\alpha^ie_{12}$, $E_\alpha^ie_{21}$, or $E_\alpha^iE_\beta^je_{11}$ whenever $u_2$ equals, respectively, 1, $\epsilon_{2i}(x_\alpha)$, $\epsilon_{3i}(x_\alpha)$, or $\epsilon_{2i}(x_\alpha)\epsilon_{3j}(x_\beta)$. Define the following order: $E_{p}^s<E_{\alpha}^j$ if $s<j$ or $s=j$ and $p\leq\alpha$. This is the order induced by the lexicographic order on the set of pairs $(j, \alpha)$. It establishes the following linear order on the set $\{a_i,b_i,c_i\mid i\geq 1\}$: 
\[
a_i < a_{i+1}<b_j<b_{j+1}<c_k<c_{k+1},\qquad i,j,k\geq 1.
\]
Suppose there exists an identity of the form:
\[
f=\sum_{\underline{n}}\alpha_iu_1(x)^{\underline{n}}+\sum_{j,p,q,\underline{n}}\beta_iu_1(x)^{\underline{n}}u_{pq}^j(x),
\]
where the $u_{pq}^j$ denote the elements $u_2\in B$ such that $U_2$ is a multiple of the elementary matrix $e_{pq}$. Since the variables in $u_1=u_1(x)^{\underline{n}}$ are ordered, $u_1$ is uniquely determined by its evaluation on the generic matrices. Furthermore, when we evaluate $\sum_{\underline{n}}\alpha_iu_1(x)^{\underline{n}}$ we obtain:
\[
\matriz{\sum_{\underline{n}}\alpha_iU_1^{\underline{n}}}{0}{0}{\pm\sum_{\underline{n}}\alpha_iU_1^{\underline{n}}}.
\]
The entry $e_{22}$ does not otherwise occur in the evaluation of $f$, so $\alpha_i=0$, for every $i\geq 1$, and all we need to do is show that there are no identities $ f= \sum_{i,\underline{n}}\alpha_iu_1(x)^{\underline{n}}u_{pq}^i(x)$ with $(p,q)\in \{(1 , 2) ,(2,1),(1,1)\}$. These three cases are all similar. Let $u=u_1u_2\in \mathcal{B}$. If $\epsilon_{1s}(x_p)$ occurs in $u_1$ and $\epsilon_{ij}(x_\alpha)$ occurs in $u_2$, then by ($\mathcal{B}_3$) we have $s <j$ or $s=j$ and $p\leq \alpha$, that is, $E_{p}^s<E_{\alpha}^j$. Let us write $u(X_1,\ldots,X_k)=\pm m_1m_2\widehat{m_3}e_{pq}$, where $m_2$ and $m_3$ are the two largest variables occurring in $u$. Observe that according to the previous argument, the variables that occur in $U_2$ will be larger than those that occur in $U_1^{\underline{n}}$, so $m_2\widehat{m_3}$ corresponds to $u_2$ and $ m_1$ corresponds to $u_1$. Analogously, by ($\mathcal{B}_4$), if $m_2<m_3$, then $m_2$ and $m_3$ correspond, respectively, to the first and second variable of $u_2$. This shows that we can recover the elements $u\in \mathcal{B}$ if we know their evaluations. As $a_i$, $b_i$ and $c_i$ are algebraically independent, no non-trivial linear combination among these monomials can vanish.
 \end{proof}

In order to clarify what happened in the proof above, we consider the following examples.
\begin{example} Suppose that after evaluating $u$ in generic matrices we obtain
\[
u(X_1,\ldots,X_k)=\matriz{0}{a_ 1^2b_{10}c_2c_7}{0}{0}.
\]
We have $u_2=\epsilon_{2j}(x_\alpha)$, since we obtain a non-zero entry at position (1,2) only, and this is possible only for $\epsilon_{2j}(x_\alpha)$ (see the beginning of the proof). Furthermore, $a_1\leq a_1\leq b_{10}\leq c_2\leq c_7$. As $a_1$, $b_{10}$ and $c_2$ are less than $c_7$, these cannot occur in the evaluation of $u_2$, by $(\mathcal{B}_3)$. Therefore $u_2=\epsilon_{23}(x_7)$ and the remaining variables come from the evaluation of $u_1$, but we have already seen that $u_1$ is determined by its evaluation and, thus $u_1=\epsilon_{ 11} ( x_1)^2\epsilon_{12}(x_{10})\epsilon_{13}(x_2)$.
\end{example}

 \begin{example} 
Now suppose that after evaluating $u$ in generic matrices we obtain
\[
u(X_1,\ldots,X_k)=\matriz{a_ 1^2b_{10}c_2c_7}{0}{0}{0},
\]
then $u_2=\epsilon_{2i}(x_\alpha)\epsilon_{3j}(x_\beta)$, since we have a multiple of $e_{11}$. Ordering the variables, we obtain $a_1\leq a_1\leq b_{10}\leq c_2\leq c_7$. As $a_1$ and $b_{10}$ are less than $c_2$, this cannot happen when evaluating $u_2$. Furthermore, since $u_2$ has two factors, the only possibilities are $u_2=\epsilon_{23}(x_2)\epsilon_{33}(x_7)$ and $u_2=\epsilon_{23}(x_7)\epsilon_{ 33 }(x_2)$. The second of these does not occur, because of $(\mathcal{B}_4)$. Therefore $u_2=\epsilon_{23}(x_2)\epsilon_{33}(x_7)$ and $u_1=\epsilon_{11}(x_1)^2\epsilon_{12}(x_{10})$.
\end{example}

Lemmas \ref{Lema14} and \ref{Lema15} imply the following theorem.
\begin{theorem}
Let $G=A_4$, $A_5$ or $S_4$. The identities of Lemma \ref{Lema13} form a basis of the $G$-identities for the pair $(M_2( \mathbb{C}),sl_2( \mathbb{C}))$.
\end{theorem}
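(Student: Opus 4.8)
The final theorem asserts that the eight $G$-identities listed in Lemma \ref{Lema13} generate the $G$-ideal of weak $G$-identities for the pair $(M_2(\mathbb{C}), sl_2(\mathbb{C}))$ when $G$ is one of $A_4$, $A_5$, $S_4$. The plan is the standard two-sided argument already used for the cyclic and dihedral cases: show that the identities of Lemma \ref{Lema13} are genuine weak $G$-identities (done by the lemma itself), then prove that the $G$-ideal $I$ they generate is the whole of $\mathrm{Id}^G(M_2(\mathbb{C}),sl_2(\mathbb{C}))$ by sandwiching: on one hand $I \subseteq \mathrm{Id}^G(M_2(\mathbb{C}),sl_2(\mathbb{C}))$, and on the other hand the quotient $\mathbb{C}\langle X;G\rangle / I$ is not larger than $\mathbb{C}\langle X;G\rangle / \mathrm{Id}^G(M_2(\mathbb{C}),sl_2(\mathbb{C}))$. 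The latter is achieved by exhibiting a spanning set $B$ of the quotient (Lemma \ref{Lema14}) whose image in the relatively free pair $(R^G, S^G)$ is linearly independent (Lemma \ref{Lema15}). Hence the two quotients coincide and $I = \mathrm{Id}^G(M_2(\mathbb{C}),sl_2(\mathbb{C}))$.

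Concretely I would argue as follows. First invoke Lemma \ref{Lema13}: every relation (1)--(8) holds on the pair, so $I \subseteq \mathrm{Id}^G(M_2(\mathbb{C}),sl_2(\mathbb{C}))$. Next, recall from the discussion preceding the lemma that $\mathbb{C}G$ decomposes as $J_0 \oplus M_3(\mathbb{C})$ with $J_0$ annihilating $sl_2(\mathbb{C})$, so that on the pair every $G$-variable $x^g$ is a linear combination of the $\epsilon_{ij}(x)$; thus modulo $\mathrm{Id}^G$ (and modulo $I$, using identity (1)) we may work exclusively with words in the $\epsilon_{ij}(x_k)$. Then Lemma \ref{Lema14} shows that $\{u + I : u \in B\}$ spans $\mathbb{C}\langle X;G\rangle/I$, where $B$ is the set of normal-form monomials $u = u_1 u_2$ subject to conditions $(\mathcal{B}_1)$--$(\mathcal{B}_4)$: the identities (2),(3),(4),(5),(6),(7),(8) are exactly the rewriting rules needed to push all $\epsilon_{1j}$-letters to the left (into an ordered commuting block $u_1$), kill repeated $\epsilon_{2j}$ or $\epsilon_{3j}$ letters, reduce the right tail $u_2$ to at most one $\epsilon_{2i}$ followed by at most one $\epsilon_{3j}$, and normalize the indices. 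Finally, Lemma \ref{Lema15} evaluates the elements of $B$ on the generic traceless matrices $X_i = a_i(e_{11}-e_{22}) + b_i e_{12} + c_i e_{21}$: the block $u_1$ evaluates to $U_1^{\underline{n}}(e_{11} \pm e_{22})$ with $U_1^{\underline{n}}$ a monomial in the $a_i,b_i,c_i$, and $u_2$ evaluates to a monomial times one of $e_{11}, e_{12}, e_{21}$; using the linear order $a_i < a_{i+1} < b_j < b_{j+1} < c_k < c_{k+1}$ and conditions $(\mathcal{B}_3)$--$(\mathcal{B}_4)$, one recovers $u_1$ and $u_2$ uniquely from the evaluation, so distinct elements of $B$ give algebraically independent (hence linearly independent) evaluations.

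Putting these together: Lemma \ref{Lema14} gives $\dim \mathbb{C}\langle X;G\rangle/I \le |B|$ in each multidegree, while Lemma \ref{Lema15} gives $\dim \mathbb{C}\langle X;G\rangle/\mathrm{Id}^G(M_2(\mathbb{C}),sl_2(\mathbb{C})) \ge |B|$ in each multidegree; combined with the inclusion $I \subseteq \mathrm{Id}^G(M_2(\mathbb{C}),sl_2(\mathbb{C}))$ this forces equality, which is exactly the assertion of the theorem. One should also note that although Lemmas \ref{Lema14} and \ref{Lema15} are stated uniformly, nothing in the argument depends on \emph{which} of $A_4$, $A_5$, $S_4$ we have beyond the fact — established earlier via the uniform $M_3(\mathbb{C})$-description of the relevant ideal of $\mathbb{C}G$ — that $sl_2(\mathbb{C})$ is a faithful irreducible $3$-dimensional $\mathbb{C}G$-module on which that ideal acts as all of $M_3(\mathbb{C})$; this is why the three cases are handled simultaneously.

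The main obstacle is the combinatorial bookkeeping inside Lemma \ref{Lema14}: one must verify that the rewriting rules furnished by identities (2)--(8) genuinely terminate and produce a monomial satisfying all of $(\mathcal{B}_1)$--$(\mathcal{B}_4)$ without ever re-introducing a forbidden pattern — in particular the interplay between moving $\epsilon_{3j}$ past $\epsilon_{2i}$ (identity (6)) and reordering the second indices against $u_1$ (identities (2) and (7)) has to be shown confluent, and the index-adjustment cases (a),(b),(c) in the treatment of $u_2 = \epsilon_{2i}(x_\alpha)\epsilon_{3j}(x_\beta)$ must be checked to preserve $(\mathcal{B}_3)$. Once the normal form $B$ is pinned down correctly, the linear-independence step (Lemma \ref{Lema15}) is essentially forced by the algebraic independence of the $a_i, b_i, c_i$ together with the order-based recovery procedure, so the reduction lemma is where the real work lies.
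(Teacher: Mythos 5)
Your proposal is correct and follows exactly the paper's argument: the theorem is deduced as an immediate consequence of Lemma \ref{Lema14} (the quotient by the ideal generated by the identities of Lemma \ref{Lema13} is spanned by $B$) and Lemma \ref{Lema15} (no non-trivial linear combination of elements of $B$ vanishes on the pair), via the standard sandwich argument. The additional remarks on confluence of the rewriting rules and on the uniform treatment of the three groups accurately describe where the real work lies, but that work is already contained in the cited lemmas.
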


\end{document}